\def\titlerunning#1{\gdef\titrun{#1}}
\def\author#1{\gdef\autrun{\def\and{\unskip, }#1}\gdef\@author{#1}}
\def\address#1{{\def\and{\\\hspace*{18pt}}\renewcommand{\thefootnote}{}%
\footnote {#1}}%
\markboth{\autrun}{\titrun}}
\def\email#1{e-mail: #1}
\def\subjclass#1{{\renewcommand{\thefootnote}{}%
\footnote{\emph{Mathematics Subject Classification (2010):} #1}}}
\def\keywords#1{\par\medskip
\noindent\textbf{Keywords.} #1}
\newtheorem{thm}{Theorem}[section]
\newtheorem{cor}[thm]{Corollary}
\newtheorem{lem}[thm]{Lemma}
\newtheorem{prop}[thm]{Proposition}
\theoremstyle{definition}
\newtheorem{rem}[thm]{Remark}
\newtheorem{exa}[thm]{Example}
\numberwithin{equation}{section}
\begin{document}


\baselineskip=17pt


\titlerunning{Local dimensions for the random $\beta$-transformation}

\title{Local dimensions for the random $\beta$-transformation}
\author{Karma Dajani
\and
Charlene Kalle}

\date{}

\maketitle

\address{K. Dajani: Department of Mathematics, Utrecht University, Postbus 80.000, 3508 TA Utrecht, the Netherlands; \email{k.dajani1@uu.nl}
\and
C. Kalle: Mathematical Institute, Leiden University, Postbus 9512, 2300 RA Leiden, the Netherlands; \email{kallecccj@math.leidenuniv.nl}}


\subjclass{Primary, 37A05, 37C45, 11K16, 42A85.}

\begin{abstract}
The random $\beta$-transformation $K$ is isomorphic to a full shift. This relation gives an invariant measure for $K$ that yields the Bernoulli convolution by projection. We study the local dimension of the invariant measure for $K$ for special values of $\beta$ and use the projection to obtain results on the local dimension of the Bernoulli convolution.
\keywords{$\beta$-expansion, random transformation, Bernoulli convolution, local dimension, Pisot number}
\end{abstract}

\newcommand{\fb}{\frac{\lfloor \beta \rfloor}{\beta-1}}
\newcommand{\bint}{\big[0,\frac{\lfloor \beta \rfloor}{\beta-1}\big]}

\section{Introduction}
The Bernoulli convolution has been around for over seventy years and has surfaced in several different areas of mathematics. This probability measure depends on a parameter $\beta >1$ and is defined on the interval $\big[ 0, \fb \big]$, where $\lfloor \beta \rfloor$ is the largest integer not exceeding $\beta$. The {\bf symmetric Bernoulli convolution} is the distribution of $\sum_{k=1}^{\infty} \frac{b_k}{\beta_k}$ where the coefficients $b_k$ take values in the set $\{ 0 ,1, \ldots, \lfloor \beta \rfloor \}$, each with probability $\frac{1}{\lfloor \beta \rfloor +1}$. If instead the values $0, 1, \ldots, \lfloor \beta \rfloor$ are not taken with equal probabilities, then the Bernoulli convolution is called {\bf asymmetric} or {\bf biased}. See \cite{PSS00} for an overview of results regarding the Bernoulli convolution up to the year 2000. Recently attention has shifted to the multifractal structure of the Bernoulli convolution. Jordan, Shmerkin and Solomyak study the multifractal spectrum for typical $\beta$ in \cite{JSS11}, Feng considers Salem numbers $\beta$ in \cite{Fen12} and Feng and Sidorov look at the Lebesgue generic local dimension of the Bernoulli convolution in \cite{FS11}. In this paper we are interested in the local dimension function of the Bernoulli convolution and to study the local dimension we use a new approach.

If a point $x \in \big[ 0, \fb \big]$ can be written as $x = \sum_{k=1}^{\infty} \frac{b_k}{\beta^k}$ with $b_k \in \{0,1, \ldots, \lfloor \beta \rfloor\}$ for all $k \ge 1$, then this expression is called a $\beta$-expansion of the point $x$. In \cite{S03} (see also \cite{DV05}) it is shown that Lebesgue almost every $x$ has uncountably many $\beta$-expansions. In \cite{DV05} a random transformation $K$ was introduced that generates for each $x$ all these possible expansions by iteration. The map $K$ can be identified with a full shift which allows one to define an invariant measure $\nu_{\beta}$ for $K$ of maximal entropy by pulling back the uniform Bernoulli measure. One obtains the Bernoulli convolution from $\nu_{\beta}$ by projection. In this paper we study the local dimension of the measure $\nu_{\beta}$. By projection, some of these results can be translated to the Bernoulli convolution. For now, our methods work only for a special set of $\beta$'s called the generalised multinacci numbers. We have good hopes that in the future we can extend these methods to a more general class of $\beta$'s.

The paper is organized as follows. In the first section we will give the necessary definitions. Next we study the local dimension of $\nu_{\beta}$. We give a formula for the lower and upper bound of the local dimension that holds everywhere using a suitable Markov shift. Moreover, we show that the local dimension exists and is constant a.e.~and we give this constant. We also show that on the set corresponding to points with a unique $\beta$-expansion, the local dimension of $\nu_{\beta}$ takes a different value. Next we translate these results to a lower and upper bound for the local dimension of the symmetric Bernoulli convolution that holds everywhere. We then use a result from \cite{FS11} to obtain an a.e.~value for the Bernoulli convolution in case $\beta$ is a Pisot number. Finally we give the local dimension for points with a unique expansion. In the last section we consider one specific example of an asymmetric Bernoulli convolution, namely when $\beta$ is the golden ratio. We give an a.e.~lower and upper bound for the local dimension of both the invariant measure for $K$ and the asymmetric Bernoulli convolution. This last section is just a starting point for more research in this direction.

\section{Preliminaries}

The set of $\beta$'s we consider in this paper, the generalised multinacci numbers, are defined as follows. On the interval $\big[0,\fb \big]$ the {\bf greedy} $\beta$-transformation $T_{\beta}$ is given by
\[ T_{\beta} x = \left\{
\begin{array}{ll}
\beta x \, (\text{mod }1), & \text{if } x \in \big[0,1),\\
\beta x - \lfloor \beta \rfloor, & \text{if } x \in \big[1, \fb \big].
\end{array}
\right.\]
The {\bf greedy digit sequence} of a number $x \in \big[ 0, \fb \big]$ is defined by setting 
\[ a_1=a_1(x) = \left\{
\begin{array}{ll}
k, & \text{if } x \in \big[\frac{k}{\beta}, \frac{k+1}{\beta} \big), \, k \in \{0, \ldots, \lfloor \beta \rfloor -1 \},\\
\lfloor \beta \rfloor, & \text{if } x \in \big[ \frac{\lfloor \beta \rfloor}{\beta}, \fb \big],
\end{array}
\right.\]
and for $n \ge 1$, $a_n=a_n(x) = a_1(T_{\beta}^{n-1} x)$. Then $T_{\beta}x = \beta x - a_1(x)$ and one easily checks that $x = \sum_{n=1}^{\infty} \frac{a_n}{\beta^n}$. This $\beta$-expansion of $x$ is called its {\bf greedy $\beta$-expansion}. A number $\beta >1$ is called a {\bf generalised multinacci number} if the greedy $\beta$-expansion of the number 1 satisfies
\begin{equation}\label{q:genmn}
1 = \frac{a_1}{\beta} + \frac{a_2}{\beta^2} + \cdots + \frac{a_n}{\beta^n},
\end{equation}
with $a_j \ge 1$ for all $1 \le j \le n$ and $n \ge 2$. (Note that $a_1 = \lfloor \beta \rfloor$.) We call $n$ the {\bf degree} of $\beta$.

\begin{rem}{\rm
Between 1 and 2  the numbers that satisfy this definition are called the multinacci numbers. The {\bf $n$-th multinacci number} $\beta_n$ satisfies
\[ \beta^n_n = \beta^{n-1}_n + \beta^{n-2}_n + \cdots + \beta_n + 1,\]
which implies that $a_j=1$ for all $1 \le j \le n$ in (\ref{q:genmn}). The second multinacci number is better known as the golden ratio.}
\end{rem}

\vskip .2cm
For the Markov shift we will construct later on, we need a suitable partition of the interval $\bint$. Consider the maps $T_k x = \beta x -k$, $k =0, \ldots, \lfloor \beta \rfloor$. For each $x \in \bint$, either there is exactly one $k \in \{0, \ldots, \lfloor \beta \rfloor \}$ such that $T_k x \in \bint$, or there is a $k$ such that both $T_k x$ and $T_{k+1} x$ are in $\bint$. In this way the maps $T_k$ partition the interval $\bint$ into the following regions:
\[ \begin{array}{ll}
\displaystyle E_0 = \Big[0, \frac{1}{\beta} \Big), & \displaystyle E_{\lfloor \beta \rfloor} = \Big(  \frac{\lfloor \beta \rfloor}{\beta(\beta -1)} +\frac{\lfloor \beta \rfloor-1}{\beta}, \fb \Big],\\
\\
\displaystyle E_k = \Big( \frac{\lfloor \beta \rfloor}{\beta(\beta -1)} +\frac{k-1}{\beta}, \frac{k+1}{\beta}\Big), & k \in \{ 0, 1, \ldots, \lfloor \beta \rfloor \},\\
\\
\displaystyle S_k = \Big[ \frac{k}{\beta}, \frac{\lfloor \beta \rfloor}{\beta(\beta-1)} + \frac{k-1}{\beta} \Big], & k \in \{1, \ldots, \lfloor \beta \rfloor \}.
\end{array}\]
See Figure~\ref{f:2.5} for a picture of the maps $T_k$ and the regions $E_k$ and $S_k$ in case $2 < \beta < 3$.
\begin{figure}[ht]
\centering
\includegraphics{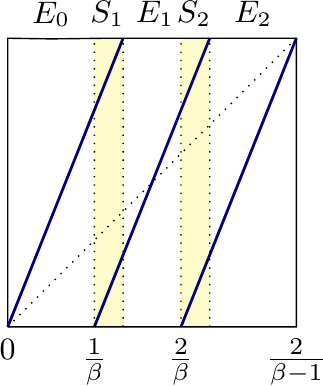}
\caption{The maps $T_0 x = \beta x$, $T_1 x = \beta x-1$ and $T_2 x = \beta x -2$ and the intervals $E_0$, $S_1$, $E_1$, $S_2$ and $E_2$ for some $2< \beta <3$.}
\label{f:2.5}
\end{figure}

\vskip .2cm
Write $\Omega = \{0,1 \}^{\mathbb N}$. The {\bf random $\beta$-transformation} is the map $K$ from the space $\Omega \times \bint$ to itself defined as follows.
\[ K(\omega, x) = \left\{
\begin{array}{ll}
(\omega, T_k x), & \text{if } x \in E_k, \, k \in \{0, \ldots, \lfloor \beta \rfloor \},\\
(\sigma \omega, T_{k-1+\omega_1} x), & \text{if }x \in S_k, \, k \in \{ 1, \ldots, \lfloor \beta \rfloor\},
\end{array}
\right.\]
where $\sigma$ denotes the left shift on sequences, i.e., $\sigma (\omega_n)_{n \ge 1} = (\omega_{n+1})_{n \ge 1}$. The projection onto the second coordinate is denoted by $\pi$. Let $\lceil \beta \rceil$ denote the smallest integer not less than $\beta$. The map $K$ is isomorphic to the full shift on $\lceil \beta \rceil$ symbols. The isomorphism $\phi: \Omega \times \bint \to \{0,1, \ldots, \lfloor \beta \rfloor\}^{\mathbb N}$ uses the digit sequences produced by $K$. Let
\[b_1(\omega,x) = \left\{
\begin{array}{ll}
k, & \text{if } x \in E_k, \, k \in \{0, 1, \ldots, \lfloor \beta \rfloor \},\\
& \  \text{or if } x \in S_k \text{ and } \omega_1=1, \, k \in \{1, \ldots, \lfloor \beta \rfloor \},\\
k-1, & \text{if } x \in S_k \text{ and } \omega_1=0, \, k \in \{ 1, \ldots, \lfloor \beta \rfloor \}
\end{array}
\right.\]
and for $n \ge 1$, set $b_n(\omega,x) = b_1\big(K^{n-1}(\omega, x)\big)$. Then
\[ \phi(\omega,x) = \big(b_n (\omega,x) \big)_{n \ge 1}.\]
This map is a bimeasurable bijection from the set $Z=\big\{ (\omega, x) \, : \, \pi \big(K^n (\omega, x)\big) \in S \, \, \text{i.o.} \big\}$ to its image. We have $\phi\circ K = \sigma \circ \phi$. Let $\mathcal F$ denote the $\sigma$-algebra generated by the cylinders and let $m$ denote the uniform Bernoulli measure on $(\{0,1, \ldots, \lfloor \beta \rfloor\}^{\mathbb N}, \mathcal F)$. Then $m$ is an invariant measure for $\sigma$ and $\nu_{\beta} = m \circ \phi$ is invariant for $K$ with $\nu_{\beta}(Z)=1$. The projection $\mu_{\beta} = \nu_{\beta} \circ \pi^{-1}$ is the Bernoulli convolution on $\bint$. For proofs of these facts and more information on the map $K$ and its properties, see \cite{DK03} and \cite{DV05}.

\vskip .2cm
We are interested in the local dimension of the measures $\nu_{\beta}$ and $\mu_{\beta}$. For any probability measure $\mu$ on a metric space $(X,\rho)$, define the {\bf local lower} and {\bf local upper dimension} functions by
\[ \underline{d}(\mu,x) = \liminf_{r \downarrow 0} \frac{\log \mu\big(B_{\rho}( x,r)\big)}{\log r} \quad \text{and} \quad \overline{d}(\mu,x) = \limsup_{r \downarrow 0} \frac{\log \mu\big(B_{\rho}(x,r)\big)}{\log r},\]
where $B_{\rho}(x,r)$ is the open ball around $x$ with radius $r$. If $\underline{d}(\mu,x) = \overline{d}(\mu,x)$, then the {\bf local dimension} of $\mu$ at the point $x \in X$ exists and is given by
\[ d(\mu,x)= \lim_{r \downarrow 0} \frac{\log \mu\big(B_{\rho}(x,r)\big)}{\log r}.\]
On the sets $\{0,1, \ldots, \lfloor \beta \rfloor\}^{\mathbb N}$ and $\Omega$ we define the metric $D$ by
\[ D \big( \omega, \omega' \big) = \beta^{-\min\{k\ge 0 \, :\, \omega_{k+1} \neq \omega_{k+1}'\}}.\]
We will define an appropriate metric on the set $\Omega \times \bint$ later.

\section{Local dimension for $\nu_{\beta}$}

We will study the local dimension of the invariant measure $\nu_{\beta}$ of the map $K$ for $\beta$'s that are generalised multinacci numbers. It is proven in \cite{DV05} that for these $\beta$'s the dynamics of $K$ can be modeled by a subshift of finite type. So, on the one hand there is the isomorphism of $K$ with the full shift on $\lceil \beta \rceil$ symbols and on the other hand there is an isomorphism to a subshift of finite type. It is this second isomorphism that allows us to code orbits of points $(\omega, x)$ under $K$ in an appropriate way for finding local dimensions. We give the essential information here.

\medskip
We begin with some notation. We denote the {\bf greedy map} by $T_{\beta}$ as before, and the {\bf lazy map} by $S_{\beta}$. More precisely, 
\[ T_{\beta} x = \left\{
\begin{array}{ll}
T_0 x, & \text{if } x \in E_0,\\
\\
T_k x, & \text{if }x \in  S_k\cup E_k,\\
& \quad 1 \le k \le \lfloor \beta \rfloor,
\end{array}
\right.
\quad \text{and} \quad S_{\beta}x = \left\{
\begin{array}{ll}
T_k x, & \text{if } x \in E_k\cup S_{k+1},\\
& \quad 0 \le k \le \lfloor \beta \rfloor-1,\\
\\
T_{\lfloor \beta \rfloor} x, & \text{if }x \in E_{\lfloor \beta \rfloor}.
\end{array}
\right.\]

\noindent
We will be interested in the $K$-orbit of the points $(\omega,1)$ and of their `symmetric counterparts' $(\omega, \frac{\lfloor \beta \rfloor}{\beta-1}-1)$. 
Proposition 2 (ii) in \cite {DV05} tells us that the following set $F$ is finite:
\begin{multline}\label{q:finite}
F=\Big\{\pi \big(K^n(\omega, 1)\big), \pi \Big(K^n\big(\omega, \frac{\lfloor \beta \rfloor}{\beta-1}-1\big)\Big) \, : \, n \ge 0,\, \omega \in \Omega\Big\}\\
\cup \Big\{\frac{k}{\beta}, \frac{\lfloor \beta \rfloor}{\beta(\beta -1)} + \frac{k}{\beta} : k \in \{0, \ldots, \lfloor \beta \rfloor \} \Big\}.
\end{multline}
These are the endpoints of the intervals $E_k$ and $S_k$ and their forward orbits under all the maps $T_k$. The finiteness of $F$ implies that the dynamics of $K$ can be identified with a topological Markov chain. To find the Markov partition, one starts by refining the partition given by the sets $E_k$ and $S_k$, using the points from the set $F$. Let $\mathcal C$ be the interval partition consisting of the open intervals between the points from this set. Note that when we say {\bf interval partition}, we mean a collection of pairwise disjoint open intervals such that their union covers the interval $\bint$ up to a set of $\lambda$-measure 0, where $\lambda$ is the one-dimensional Lebesgue measure. Write
\[ \mathcal C=\{C_1,C_2,\ldots , C_L\}.\]
Let $ S=\bigcup_{1 \le k \le \lfloor \beta \rfloor} S_k$. The property {\bf p3} from \cite{DV05} says that no points of $F$ lie in the interior of $S$, i.e., each $S_k$ corresponds to a set $C_j$ in the sense that for each $1 \le k \le \lfloor \beta \rfloor$ there is a $1 \le j \le L$ such that $\lambda(S_k \setminus C_j)=0$. Let $s \subset \{1, \ldots, L\}$ be the set containing those indices $j$. Consider the $L\times L$ adjacency matrix $A=(a_{i, j})$ with entries in $\{0,1\}$ defined by
\begin{equation}\label{matrix}
a_{i,j}\, =\, \left\{ \begin{array}{ll}
1, & {\mbox{ if }}\; i\not \in s \text{ and } \lambda(C_j \cap T_{\beta}(C_i))=\lambda(C_j),\\
0, & \text{ if }\; i\not \in s \text{ and } \lambda(C_i\cap T_{\beta}C_j)= 0,\\
1, & \text{ if }\; i \in s \text{ and } \lambda(C_j \cap T_{\beta} C_i)=\lambda(C_j) \text{ or } \lambda(C_j \cap S_{\beta}C_i)=\lambda(C_j),\\
0, & \text{ if }\; i \in s \text{ and } \lambda(C_i\cap T_{\beta}C_j)= 0 \text{ and } \lambda(C_i\cap S_{\beta}C_j)= 0.
\end{array}\right.
\end{equation}
\noindent Define the partition $\mathcal P$ of $\Omega \times \bint$ by
\[ \mathcal P=\big\{\Omega \times C_j : j \not \in s \big\} \cup \big\{ \{\omega_1=i\} \times C_j: i \in \{0,1\}, \, j \in s \big\}.\]
Then $\mathcal P$ is a Markov partition underlying the map $K$. Let $Y$ denote the topological Markov chain determined by the matrix $A$. That is, $Y=\{(y_n)_{n \ge 1}\in \{1,\dots ,L\}^{\mathbb N}:a_{y_n, y_{n+1}}=1 \}$. Let $\mathcal Y$ denote the $\sigma$-algebra on $Y$ determined by the cylinder sets, i.e., the sets specifying finitely many digits, and let $\sigma_Y$ be the left shift on $Y$. We use Parry's recipe (\cite{Par64}) to determine the Markov measure $Q$ of maximal entropy for $(Y, \mathcal Y, \sigma_Y)$. By results in \cite{DV05} we know that $\nu_{\beta}$ is the unique measure of maximal entropy for $K$ with entropy $h_{\nu_{\beta}}(K)=\log \lceil \beta \rceil$. By the identification with the Markov chain we know that $h_Q(\sigma_Y) = \log \lceil \beta \rceil$. One then gets that the corresponding transition matrix $(p_{i,j})$ for $Y$ satisfies $p_{i,j}=a_{i,j}\frac{v_j}{\lceil \beta \rceil v_i}$, where $(v_1,v_2, \ldots,v_L)$ is the right probability eigenvector of $A$ with eigenvalue $\lfloor \beta \rfloor +1$. From this we see that if $[j_1 \cdots j_m]$ is an allowed cylinder in $Y$, then
\begin{equation}\label{q:qmeasure}
Q([j_1 \cdots j_m])=\frac{v_{j_m}}{\lceil \beta \rceil^{m-1}}.
\end{equation}
Property {\bf p5} from \cite{DV05} says that for all $i \in s$, $a_{i,1}=a_{i,L}=1$ and $a_{i,j}=0$ for all other $j$. By symmetry of the matrix $(p_{i,j})$, it follows that
\begin{equation}\label{q:ps1L}
p_{i, 1} = p_{i,L} = \frac12 \quad \text{for all } i \in s.
\end{equation}
Let
\[ X = \Omega \times \bint \backslash \big( \bigcup_{n \ge 0} K^{-n} F  \big).\]
Then $\nu_{\beta} (X) =1$. The isomorphism $\alpha: X \to Y$ between $(K, \nu_{\beta})$ and $(\sigma_Y, Q)$ is then given by 
$$\alpha_j(\omega, x) = k \mbox{ if } K^{j-1}(\omega, x) \in C_k.$$ 
See Theorem 7 in \cite{DV05} for a proof of this fact. In Figure~\ref{f:diagram} we see the relation between the different systems we have introduced so far.
\begin{figure}[ht]
\centering
\includegraphics{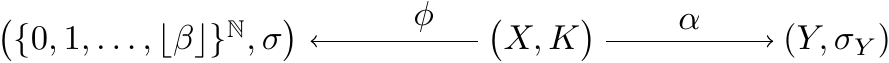}
\caption{The relation between the different spaces.}
\label{f:diagram}
\end{figure}

To study the local dimension of $\nu_{\beta}$, we need to consider balls in $X$ under a suitable metric. Define the metric $\rho$ on $X$ by
\[ \rho\big((\omega,x),(\omega^{\prime},x^{\prime})\big)=\beta^{-\min\{k\ge 0\, : \, \omega_{k+1}\not=\omega^{\prime}_{k+1} \text{ or } \alpha_{k+1}(\omega,x)\not=\alpha_{k+1}(\omega^{\prime},x^{\prime}) \}}.\]
Consider the ball 
\[ B_{\rho}\big((\omega,x),\beta^{-k}\big)=\{(\omega^{\prime},x^{\prime})\, : \, \omega_i^{\prime}=\omega_i, \mbox{ and } \, \alpha_i(\omega^{\prime},x^{\prime})= \alpha_i(\omega,x), \, i=1,\cdots ,k\}.\]
Let
\[ M_k(\omega,x)=\sum_{i=0}^{k-1}{\bf 1}_{X \cap \Omega\times S}\big(K^i(\omega,x)\big) = \# \{ 1 \le i \le k \, : \, \alpha_i(\omega,x) \in s \}.\]
To determine $\nu_{\beta}\big(B_{\rho}((\omega,x),r)\big)$ for $r \downarrow 0$ we will calculate $Q \Big( \alpha \Big(B_{\rho}\big((\omega,x),\beta^{-k}\big) \Big)\Big)$. For all points $(\omega',x')$ in the ball $B_{\rho}\big((\omega,x),\beta^{-k}\big)$ the $\alpha$-coding starts with $\alpha_1(\omega, x) \cdots \alpha_k(\omega, x)$ and $\omega'$ starts with $\omega_1 \cdots \omega_k$. From the second part we know what happens the first $k$ times that the $K$-orbit of a point $(\omega', x')$ lands in $\Omega \times S$. Since $M_k(\omega, x)$ of these values have been used for $\alpha_1(\omega, x)\cdots \alpha_k(\omega,x)$, there are $k-M_k(\omega,x)$ unused values left. Note that $M_k(\omega', x') = M_k(\omega, x)=M_k$ for all $(\omega', x') \in B_{\rho}\big((\omega,x),\beta^{-k}\big)$. Define the set
\[ Z = X \cap \bigcap_{n \ge 1}\bigcup_{i \ge 1}  K^{-i} \big(\Omega \times S\big).\]
All points in $Z$ land in the set $\Omega \times S$ infinitely often under $K$. Since $Z$ is $K$-invariant, by ergodicity of $K$ we have $\nu_{\beta} (Z) =1$. So, all points $(\omega', x') \in B_{\rho}\big((\omega,x),\beta^{-k}\big) \cap Z$ make a transition to $S$ some time after $k$. Moreover, after this transition these points move to $C_1$ if $\omega_{M_k+1} =1$ and to $C_L$ otherwise. The image of a point $(\omega', x') \in B_{\rho}\big((\omega,x),\beta^{-k}\big)$ under $\alpha$ will thus have the form
\begin{multline*}
\alpha_1 \cdots \alpha_k \, \underbrace{a_{k+1} \cdots a_{m_1-1}}_{\not \in s} \, \underbrace{a_{m_1}}_{\in s} \, \underbrace{a_{m_1+1} \cdots a_{m_2-1}}_{\not \in s} \, \underbrace{a_{m_2}}_{\in s} \\
\cdots \, \underbrace{a_{m_{N-1}-1}\cdots a_{m_N-1}}_{\not \in s} \, \underbrace{a_{m_N}}_{\in s} \, \underbrace{a_{m_N+1}a_{m_N+2} \cdots}_{\text{tail}},
\end{multline*}
where $a_{m_j+1} \in \{1,L\}$, $m_{j+1}-m_j-2 \ge 1$ and $N = k-M_k(\omega, x)$. Note that by the ergodicity of $\nu_{\beta}$ we have
\[ Q \Big( \bigcup_{m \ge 1} [a_1 \cdots a_m] \, : \, a_m \in s \text{ and } a_i \not \in s , \, i < m \Big) = \nu_{\beta} \Big( X \cap \bigcup_{m \ge 1} K^{-m} (\Omega \times S) \Big)=1.\]
So, the transition from any state to $s$ occurs with probability 1. Then one of the digits $\omega_j$, $M_k+1 \le j \le k$, specifies what happens in this event and by (\ref{q:ps1L}) both possibilities happen with probability $\frac12$. To determine the measure of all possible tails of sequences in $\alpha \Big(B_{\rho}\big((\omega,x),\beta^{-k}\big) \Big)$, note that again by {\bf p5} of \cite{DV05} this tail always belongs to a point in $\Omega \times C_1$ or $\Omega \times C_L$. Since the $\nu_{\beta}$-measure of these sets is the same, the $Q$-measure of the set of all possible tails is given by $\nu_{\beta}(\Omega \times C_1)= \mu_{\beta}(C_1)$. Putting all this together gives
\begin{equation}\label{q:Qball}
Q \Big( \alpha \Big(B_{\rho}\big((\omega,x),\beta^{-k}\big) \Big)\Big) = \lceil \beta \rceil^{-(k-1)} v_{\alpha_k(\omega,x)} \cdot \underbrace{1 \cdot \frac{1}{2} \cdot 1 \cdot \frac{1}{2} \cdots 1\cdot \frac{1}{2}}_{k-M_k(\omega, x) \text{ times}} \cdot \, \mu_{\beta}(C_1),
\end{equation}
and hence,
\begin{equation}\label{q:nuball}
 \nu_{\beta}\big(B_{\rho}((\omega,x),\beta^{-k})\big) = \lceil \beta \rceil^{-(k-1)}v_{\alpha_k(\omega,x)}\,2^{-(k-M_k(\omega,x))}\,\mu_{\beta}(C_1).
\end{equation}
This gives the following theorem.
\begin{thm}\label{t:locdim}
Let $\beta >1$ be a generalised multinacci number. For all $(\omega,x) \in X$ we have
\begin{multline}\label{q:locdim}
\frac{\log \lceil \beta \rceil}{\log \beta} +\frac{\log 2}{\log \beta}\Big[ 1-\limsup_{k\to\infty}\frac{M_k(\omega,x)}{k} \Big] \le \underline d\big(\nu_{\beta}, (\omega,x)\big)\\
\le \overline d\big(\nu_{\beta}, (\omega,x)\big) \le \frac{\log \lceil \beta \rceil}{\log \beta} +\frac{\log 2}{\log \beta}\Big[ 1-\liminf_{k\to\infty}\frac{M_k(\omega,x)}{k} \Big].
\end{multline}
\end{thm}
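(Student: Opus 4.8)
The plan is to read the whole statement off the exact formula (\ref{q:nuball}), first along the radii $r=\beta^{-k}$ and then interpolating to all $r\downarrow 0$; in fact the argument will show that both inequalities in (\ref{q:locdim}) are equalities. For $(\omega,x)\in X$ and $k\ge 1$ set
\[\delta_k(\omega,x)=\frac{\log\nu_\beta\big(B_\rho((\omega,x),\beta^{-k})\big)}{\log\beta^{-k}}.\]
Substituting (\ref{q:nuball}) and dividing numerator and denominator by $-k\log\beta$ gives
\[\delta_k(\omega,x)=\frac{\log\lceil\beta\rceil}{\log\beta}+\frac{\log 2}{\log\beta}\Big(1-\frac{M_k(\omega,x)}{k}\Big)+\varepsilon_k(\omega,x),\]
where $\varepsilon_k(\omega,x)$ collects the terms $\tfrac{\log\lceil\beta\rceil}{k\log\beta}$, $-\tfrac{\log v_{\alpha_k(\omega,x)}}{k\log\beta}$ and $-\tfrac{\log\mu_\beta(C_1)}{k\log\beta}$. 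Since the eigenvector entries $v_1,\dots,v_L$ are strictly positive (cf.\ (\ref{q:qmeasure}) and the formula for $p_{i,j}$), we have $|\log v_{\alpha_k(\omega,x)}|\le\max_{1\le j\le L}|\log v_j|<\infty$, and $\mu_\beta(C_1)=\nu_\beta(\Omega\times C_1)=Q([1])=v_1$ is a fixed constant in $(0,1]$; hence $\varepsilon_k\to 0$ uniformly in $(\omega,x)\in X$.

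Since $0\le M_k(\omega,x)/k\le 1$, the sequence $(\delta_k(\omega,x))_k$ is bounded, and taking $\liminf$ and $\limsup$ in $k$ in the displayed identity, together with $\limsup_k(1-\tfrac{M_k}{k})=1-\liminf_k\tfrac{M_k}{k}$ and $\liminf_k(1-\tfrac{M_k}{k})=1-\limsup_k\tfrac{M_k}{k}$, yields
\[\liminf_{k\to\infty}\delta_k(\omega,x)=\frac{\log\lceil\beta\rceil}{\log\beta}+\frac{\log 2}{\log\beta}\Big(1-\limsup_{k\to\infty}\frac{M_k(\omega,x)}{k}\Big)\]
and the same identity for $\limsup_k\delta_k(\omega,x)$ with $\liminf_k M_k(\omega,x)/k$ in place of $\limsup_k M_k(\omega,x)/k$.

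To pass from the radii $\beta^{-k}$ to an arbitrary $r\downarrow 0$, choose $k=k(r)$ with $\beta^{-(k+1)}<r\le\beta^{-k}$, so $k(r)\to\infty$ as $r\downarrow 0$. Monotonicity of $\nu_\beta$ gives
\[\log\nu_\beta\big(B_\rho((\omega,x),\beta^{-(k+1)})\big)\le\log\nu_\beta\big(B_\rho((\omega,x),r)\big)\le\log\nu_\beta\big(B_\rho((\omega,x),\beta^{-k})\big)<0,\]
while $-(k+1)\log\beta\le\log r\le-k\log\beta<0$. Dividing the negative numerator by the negative denominator and using these bounds on both yields
\[\frac{k}{k+1}\,\delta_k(\omega,x)\le\frac{\log\nu_\beta\big(B_\rho((\omega,x),r)\big)}{\log r}\le\frac{k+1}{k}\,\delta_{k+1}(\omega,x).\]
Since $\delta_k$ is bounded and $\tfrac{k}{k+1},\tfrac{k+1}{k}\to 1$, taking $\liminf$ and $\limsup$ as $r\downarrow 0$ and comparing with the subsequence $r=\beta^{-k}$ shows $\underline d(\nu_\beta,(\omega,x))=\liminf_{k\to\infty}\delta_k(\omega,x)$ and $\overline d(\nu_\beta,(\omega,x))=\limsup_{k\to\infty}\delta_k(\omega,x)$. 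Inserting the previous paragraph gives the two outer equalities in (\ref{q:locdim}), and $\underline d\le\overline d$ is immediate.

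There is no deep obstacle here: the combinatorial work is already done in (\ref{q:nuball}). The two points that need care are checking that the boundary weights $v_{\alpha_k(\omega,x)}$ and the constant $\mu_\beta(C_1)$ are uniformly bounded away from $0$ and $\infty$, so that they are absorbed into $\varepsilon_k\to 0$ after division by $\log\beta^{-k}$, and keeping track of signs in the interpolation step, since both $\log\nu_\beta(\cdot)$ and $\log r$ are negative and one is dividing through a chain of inequalities.
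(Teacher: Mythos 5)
Your proposal is correct and follows essentially the same route as the paper: everything is read off formula (\ref{q:nuball}), with $r$ sandwiched between $\beta^{-(k+1)}$ and $\beta^{-k}$, the constants $v_{\alpha_k(\omega,x)}$ and $\mu_\beta(C_1)$ absorbed into an error term that vanishes after dividing by $k\log\beta$, and limits taken. Your packaging (exact expression for $\delta_k$ plus a monotonicity interpolation, yielding equalities rather than just the stated inequalities) is a mild reorganization of the paper's argument, not a different method.
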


\begin{proof}
Let $\frac{1}{\beta^{k+1}} < r \le \frac{1}{\beta^k}$. Set $v_{min} = \min\{v_1, \ldots, v_L\}$ and $v_{max} = \max\{v_1, \ldots, v_L\}$. Then, by (\ref{q:nuball}),
\[ \frac{\log \nu_{\beta}\big(B_{\rho}((\omega,x),r)\big)}{\log r} \le  \frac{(k-1)\log \lceil \beta \rceil}{k \log \beta} + \frac{(k-M_k(\omega, x))\log 2}{k\log\beta} - \frac{\log \big(v_{min}\, \mu_{\beta}(C_1)\big)}{k \log \beta}.\]
Hence,
\[ \overline d\big(\nu_{\beta}, (\omega,x)\big) = \limsup_{k \to \infty} \frac{\log \nu_{\beta}\big(B_{\rho}((\omega,x),r)\big)}{\log r} \le \frac{\log \lceil \beta \rceil}{\log \beta} + \frac{\log 2}{\log \beta}\Big[ 1 - \liminf_{k \to \infty} \frac{M_k(\omega,x)}{k} \Big].\]
On the other hand,
\[ \frac{\log \nu_{\beta}\big(B_{\rho}((\omega,x),r)\big)}{\log r} \ge \frac{(k-1)\log \lceil \beta \rceil}{(k+1) \log \beta} + \frac{(k-M_k(\omega, x))\log 2}{(k+1)\log\beta} - \frac{\log \big(v_{max}\, \mu_{\beta}(C_1)\big)}{(k+1) \log \beta}.\]
Since $M_{k+1}(\omega, x)-1 \le M_k(\omega,x) \le M_{k+1}(\omega, x)$, we have that
\[ \underline d\big(\nu_{\beta}, (\omega,x)\big) = \liminf_{k \to \infty} \frac{\log \nu_{\beta}\big(B_{\rho}((\omega,x),r)\big)}{\log r} \ge \frac{\log \lceil \beta \rceil}{\log \beta} + \frac{\log 2}{\log \beta}\Big[ 1 - \limsup_{k \to \infty} \frac{M_k(\omega,x)}{k} \Big].\]
This proves the theorem.
\end{proof}

\begin{rem}{\rm From the proof of the previous theorem it follows that if $\lim_{k\to\infty}\frac{M_k(\omega,x)}{k}$ exists, then $d\big(\nu_{\beta}, (\omega,x) \big)$ exists and is equal to $\frac{\log \lceil \beta \rceil}{\log \beta} +\frac{\log 2}{\log \beta}\Big[ 1-\lim_{k\to\infty}\frac{M_k(\omega,x)}{k}\Big]$.
}\end{rem}

\begin{cor}\label{c:locdimnubeta}
Let $\beta$ be a generalised multinacci number. The local dimension function $d\big(\nu_{\beta}, (\omega,x)\big)$ is constant $\nu_{\beta}$-a.e.~and equal to
\[d\big(\nu_{\beta}, (\omega,x)\big)=\frac{\log \lceil \beta \rceil}{\log \beta} +\frac{\log 2}{\log \beta} \big( 1-\mu_{\beta}(S) \big).\]
\end{cor}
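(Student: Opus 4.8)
The plan is to derive the a.e.\ constant value by combining the Remark following Theorem~\ref{t:locdim} with the Birkhoff ergodic theorem applied to the measure-preserving system $(K,\nu_\beta)$. The Remark reduces the whole problem to showing that the visit frequency $M_k(\omega,x)/k$ to the switch region $\Omega\times S$ converges $\nu_\beta$-almost everywhere, and to identifying its limit.

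First I would recall that $\nu_\beta$ is $K$-invariant and that $(K,\nu_\beta)$ is ergodic, as established in \cite{DV05} and already used above (e.g.\ to conclude $\nu_\beta(Z)=1$). By definition,
$$M_k(\omega,x)=\sum_{i=0}^{k-1}\mathbf 1_{X\cap\Omega\times S}\big(K^i(\omega,x)\big),$$
and $\mathbf 1_{X\cap\Omega\times S}$ is bounded, hence $\nu_\beta$-integrable. The Birkhoff ergodic theorem therefore gives, for $\nu_\beta$-almost every $(\omega,x)$,
$$\lim_{k\to\infty}\frac{M_k(\omega,x)}{k}=\int_{\Omega\times\bint}\mathbf 1_{X\cap\Omega\times S}\,d\nu_\beta=\nu_\beta\big(X\cap(\Omega\times S)\big).$$
Since $\nu_\beta(X)=1$ this equals $\nu_\beta(\Omega\times S)$, and because $\mu_\beta=\nu_\beta\circ\pi^{-1}$ and $\pi^{-1}(S)=\Omega\times S$, it equals $\mu_\beta(S)$. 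Thus $\lim_{k\to\infty}M_k(\omega,x)/k=\mu_\beta(S)$ for $\nu_\beta$-a.e.\ $(\omega,x)$.

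Finally, on this full-measure set I would invoke the Remark following Theorem~\ref{t:locdim}: since the limit of $M_k(\omega,x)/k$ exists there, $d(\nu_\beta,(\omega,x))$ exists and equals
$$\frac{\log\lceil\beta\rceil}{\log\beta}+\frac{\log 2}{\log\beta}\Big(1-\lim_{k\to\infty}\frac{M_k(\omega,x)}{k}\Big)=\frac{\log\lceil\beta\rceil}{\log\beta}+\frac{\log 2}{\log\beta}\big(1-\mu_\beta(S)\big),$$
which is the asserted constant. I do not expect a genuine obstacle here; the only points needing a little care are that Birkhoff is being applied directly to $\mathbf 1_{X\cap\Omega\times S}$ (so that no invariance of $X$ is required, merely $\nu_\beta(X)=1$), and the elementary identification $\nu_\beta(\Omega\times S)=\mu_\beta(S)$ coming from $\mu_\beta=\nu_\beta\circ\pi^{-1}$.
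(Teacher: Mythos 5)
Your argument is exactly the paper's: ergodicity of $\nu_{\beta}$ plus the Birkhoff Ergodic Theorem gives $\lim_k M_k(\omega,x)/k = \nu_{\beta}(\Omega\times S)=\mu_{\beta}(S)$ a.e., and then the Remark following Theorem~\ref{t:locdim} yields the stated constant value. The proposal is correct and matches the paper's proof, with your extra remarks (integrability of the indicator, no need for invariance of $X$) being harmless elaboration.
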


\begin{proof}
Since $\nu_{\beta}$ is ergodic, the Ergodic Theorem gives that for $\nu_{\beta}$-a.e.~$(\omega, x)$,
\[ \lim_{k \to \infty} \frac{M_k(\omega, x)}{k} = \nu_{\beta}\big(\Omega \times S\big) = \mu_{\beta}(S).\]
This gives the result.
\end{proof}

\medskip
Recall that $\phi$ maps points $(\omega, x)$ to digit sequences $\big( b_n(\omega,x) \big)_{n \ge 1}$. It is easy to see that $x = \sum_{n \ge 1} \frac{b_n(\omega,x)}{\beta^n}$ for each choice of $\omega \in \Omega$. Note that a point $x$ has exactly one $\beta$-expansion if and only if for all $n \ge 0$, $\pi\big( K^n (\omega,x) \big) \not \in S$. Let $\mathcal A_{\beta} \subset \bint$ be the set of points with a unique $\beta$-expansion. Then $\mathcal A_{\beta} \neq \emptyset$, since $0, \fb \in \mathcal A_{\beta}$ for any $\beta>1$. By Proposition 2 from \cite{DV05} all elements from $\cup_{n \ge 0}K^{-n}F$ will be in $S$ at some point and hence they will have more than one expansion. So, $\mathcal A_{\beta} \subset X$. The next result also follows easily from Theorem~\ref{t:locdim}. The measure $\nu_{\beta}$ is called {\bf multifractal} if the local dimension takes more than one value on positive Hausdorff dimension sets. 

\begin{cor}
Let $\beta$ be a generalised multinacci number. If $x \in \mathcal A_{\beta}$, then $d\big(\nu_{\beta}, (\omega,x)\big) =\frac{\log \lceil \beta \rceil + \log 2}{\log \beta}$ for all $\omega \in \Omega$. The measure $\nu_{\beta}$ is multifractal.
\end{cor}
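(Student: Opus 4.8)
The first statement I would deduce directly from Theorem~\ref{t:locdim} and the Remark following it. Suppose $x \in \mathcal A_{\beta}$. As recalled just before the corollary, this means $\pi\big(K^n(\omega,x)\big) \notin S$ for every $n \ge 0$ and every $\omega \in \Omega$; in particular $(\omega,x) \in X$ (since every element of $\cup_{n\ge 0}K^{-n}F$ eventually lands in $S$), and ${\bf 1}_{X \cap \Omega \times S}\big(K^i(\omega,x)\big) = 0$ for all $i \ge 0$, so $M_k(\omega,x) = 0$ for every $k$. Hence $\lim_{k\to\infty} M_k(\omega,x)/k = 0$ exists, and by the Remark after Theorem~\ref{t:locdim} the local dimension $d\big(\nu_{\beta},(\omega,x)\big)$ exists and equals $\frac{\log\lceil\beta\rceil}{\log\beta} + \frac{\log 2}{\log\beta} = \frac{\log\lceil\beta\rceil+\log 2}{\log\beta}$.

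For the multifractality statement the plan is to exhibit two sets of positive Hausdorff dimension (for the metric $\rho$) on which $d(\nu_{\beta},\cdot)$ takes distinct values. The first is a set $E_1$ of full $\nu_{\beta}$-measure on which, by Corollary~\ref{c:locdimnubeta}, $d(\nu_{\beta},\cdot) = d_1 := \frac{\log\lceil\beta\rceil}{\log\beta} + \frac{\log 2}{\log\beta}\big(1-\mu_{\beta}(S)\big)$; since $\underline d(\nu_{\beta},\cdot) = d_1 \ge \frac{\log\lceil\beta\rceil}{\log\beta} > 0$ on $E_1$ and $\nu_{\beta}(E_1) > 0$, the mass distribution principle yields $\dim_H E_1 \ge d_1 > 0$. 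The second is the set $\Omega \times \{0\}$: because $0 \in \mathcal A_{\beta}$, the $K$-orbit of $(\omega,0)$ never meets $\Omega \times S$, so its $\alpha$-coding does not depend on $\omega$; consequently $\rho$ restricted to $\Omega \times \{0\}$ coincides with the metric $D$ on $\Omega$, whence $\Omega \times \{0\}$ is isometric to $(\Omega,D)$ and has Hausdorff dimension $\log 2/\log\beta > 0$. By the first part of the corollary, $d(\nu_{\beta},\cdot) \equiv d_2 := \frac{\log\lceil\beta\rceil+\log 2}{\log\beta}$ on $\Omega \times \{0\}$.

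It then remains only to check that $d_1 \ne d_2$, i.e.\ that $0 < \mu_{\beta}(S) < 1$, and this is the one step that is not a formal consequence of what precedes, so I expect it to be the (small) main obstacle. The support of $\mu_{\beta}$ is the attractor of the IFS $\{x \mapsto (x+i)/\beta : 0 \le i \le \lfloor\beta\rfloor\}$, and since $\beta < \lfloor\beta\rfloor + 1$ (a generalised multinacci number has degree $\ge 2$ and is therefore not an integer) these maps already cover $\bint$, so $\mathrm{supp}(\mu_{\beta}) = \bint$ and $\mu_{\beta}(U) > 0$ for every nonempty open $U \subseteq \bint$. Each $S_k$ is a nondegenerate interval, of length $(\lfloor\beta\rfloor+1-\beta)/(\beta(\beta-1)) > 0$, so $\mu_{\beta}(S) > 0$; and $S \cap E_0 = \emptyset$ with $E_0 = [0,1/\beta)$ open forces $\mu_{\beta}(S) < 1$. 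Hence $d_1 < d_2$, the local dimension $d(\nu_{\beta},\cdot)$ takes the two distinct values $d_1$ and $d_2$ on the positive-dimension sets $E_1$ and $\Omega \times \{0\}$, and therefore $\nu_{\beta}$ is multifractal.
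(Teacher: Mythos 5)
Your proof is correct and takes essentially the same route as the paper: the first claim is exactly the paper's argument ($M_k(\omega,x)=0$ for $x\in\mathcal A_\beta$, then apply Theorem~\ref{t:locdim}), and the multifractality claim is the paper's argument (the set $\Omega\times\{x\}$, $x\in\mathcal A_\beta$, has Hausdorff dimension $\frac{\log 2}{\log\beta}$ in the metric $\rho$, while the a.e.\ value from Corollary~\ref{c:locdimnubeta} is different). The only difference is that you spell out what the paper leaves to ``standard results'': the isometry of $\Omega\times\{0\}$ with $(\Omega,D)$, the mass distribution principle giving positive dimension for the full-measure set, and the verification $0<\mu_\beta(S)<1$ via full support of $\mu_\beta$ --- all of which are correct and welcome, but not a different method.
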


\begin{proof}
If $x \in \mathcal A_{\beta}$, then $M_k(\omega, x)=0$ for all $\omega \in \Omega$ and $k \ge 1$. Hence, by (\ref{q:locdim}), $d\big(\nu_{\beta}, (\omega,x)\big) =\frac{\log \lceil \beta \rceil + \log 2}{\log \beta}$. From standard results in dimension theory and our choice of metric it follows that $dim_H\big(\Omega \times \{x\}\big)=\frac{\log 2}{\log \beta}$ for all $x\in \mathcal A_{\beta}$. Hence, $\nu_{\beta}$ is a multifractal measure.
\end{proof}

\begin{exa}\label{r:goldenmean}
We give a example to show what can happen on points in $F$. Let $\beta =\frac{1+\sqrt 5}{2}$ be the golden ratio. Then, $1 = \frac{1}{\beta} + \frac{1}{\beta^2}$. Figure~\ref{f:gm} shows the maps $T_0$ and $T_1$ for this $\beta$.
\begin{figure}[ht]
\centering
\includegraphics{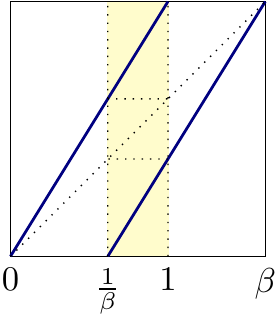}
\caption{The maps $T_0 x = \beta x$ and $T_1 x = \beta x-1$ for $\beta = \frac{1+\sqrt 5}{2}$. The region $S$ is colored yellow.}
\label{f:gm}
\end{figure}
Note that $F = \{ 0, \frac{1}{\beta}, 1, \beta \}$. The partition $\mathcal C$ consists of only three elements and the transition matrix and stationary distribution of the corresponding Markov chain are
\[ P = \left(
\begin{array}{ccc}
1/2 & 1/2 & 0\\
1/2 & 0 & 1/2\\
0 & 1/2 & 1/2
\end{array} \right) \quad \text{ and } \quad v=(1/3,1/3,1/3).\]
Hence, $\mu_{\beta}(S)=1/3$ and Corollary~\ref{c:locdimnubeta} gives that for $\nu_{\beta}$-a.e.~$(\omega,x) \in \Omega \times [0, \beta]$,
\[d\big(\nu_{\beta},(\omega,x)\big)=\frac{\log 2}{\log \beta} \Big[ 2-\mu_{\beta}(S) \Big]=(2-1/3) \frac{\log 2}{\log \beta}=\frac{5\log 2}{6 \log \beta}.\]

\vskip .2cm
Now consider the $\alpha$-code of the points $(\overline{10},1)$ and $(\overline{01},1/\beta)$, where the bar indicates a repeating block:
\[\alpha \big((\overline{10},1)\big)=\alpha \big((\overline{01},1/\beta)\big)=(s,s,s,\cdots),\]
which is not allowed in the Markov chain $Y$. Then for any point 
\[(\omega,x)\in \bigcup_{m=0}^{\infty} K^{-m} \big(\{(\overline{10},1),(\overline{01},1/\beta)\}\big),\]
one has $B_{\rho}\big((\omega,x),\beta^{-k}\big)$ is a countable set for all $k$ sufficiently large. For the local dimension this implies that
\[ d\big(\nu_{\beta},(\omega,x)\big) = \lim_{r \downarrow 0} \frac{\log 0}{\log r} = \infty.\]
\end{exa}

\section{Local dimensions for the symmetric Bernoulli convolution}

Consider now the Bernoulli convolution measure $\mu_{\beta}=\nu_{\beta}\circ \pi^{-1}$ on the interval $\bint$ for generalised multinacci numbers. In \cite{FS11}, the local dimension of $\mu_{\beta}$ with respect to the Euclidean metric was obtained for all Pisot numbers $\beta$. A {\bf Pisot number} is an algebraic integer that has all its Galois conjugates inside the unit circle. It is well known that all multinacci numbers are Pisot numbers, but unfortunately not all generalised multinacci numbers are Pisot. In Remark~\ref{r:pisot}(i) we list some classes of generalised multinacci numbers that are in fact Pisot numbers. Before stating the results from \cite{FS11}, we introduce more notation. Let
\begin{equation}\label{q:fs}
\mathcal N_k (x,\beta)=\#\left\{(a_1, \ldots, a_k)\in \{0,1, \ldots, \lfloor \beta \rfloor\}^k:\exists (a_{k+n})_{n \ge 1} \text{ with } x=\sum_{m=1}^{\infty} \frac{a_m}{\beta^m} \right\}.
\end{equation}
A straightforward calculation (see also Lemma 4.1 of \cite{Kem12}) shows  that
\[ \mathcal N_k (x,\beta)=\int_{\Omega} \, 2^{M_k(\omega,x)} \, dm(\omega),\]
where $m$ is the uniform Bernoulli measure on $\{0,1, \ldots, \lfloor \beta \rfloor\}^{\mathbb N}$ as before. In \cite{FS11}, it was shown that if $\beta$ is a Pisot number, then there is a constant $\gamma=\gamma(\beta,m)$ such that 
\begin{equation}\label{q:gamma}
\lim_{k \to\infty}\frac{\log \mathcal N_k(x,\beta)}{k}=\gamma
\end{equation}
for $\lambda$-a.e.~$x$ in $\bint$, where $\lambda$ is the one-dimensional Lebesgue measure. Using this, Feng and Sidorov obtained that for $\lambda$-a.e.~$x$,
\[ d(\mu_{\beta},x)=\frac{\log \lceil \beta \rceil -\gamma}{\log \beta}\]
 In fact, the result they obtained was stronger, but we will use their result in this form. We will show that one has the same value for the local dimension when the Euclidean metric on $\mathbb R$ is replaced by the Hausdorff metric. To this end, consider the metric $\bar \rho$ on $\bint$ defined by
\[ \bar \rho(x,y)=d_H\big(\pi^{-1}(x),\pi^{-1}(y)\big),\]
where $d_H$ is the Hausdorff distance given by
\begin{multline*}
d_H \big(\pi^{-1}(x),\pi^{-1}(y)\big)\\
=\inf\Big\{\epsilon >0:\pi^{-1}(y)\subset \bigcup_{\omega \in \Omega}B_{\rho}\big((\omega,x),\epsilon \big) \mbox{ and } \pi^{-1}(x)\subset \bigcup_{\omega \in \Omega}B_{\rho}\big((\omega,y),\epsilon\big)\Big\}.
\end{multline*}

\begin{thm}
Let $\beta$ be a generalised multinacci number. Then for all $x \in \bint$,
\begin{multline*}
\frac{1}{\log \beta} \Big[ \log \lceil \beta \rceil - \limsup_{k \to\infty}\frac{\log \mathcal N_k(x,\beta)}{k} \Big] \le \underline d(\mu_{\beta},x)\\
\le \overline d(\mu_{\beta},x) \le \frac{1}{\log \beta} \Big[ \log \lceil \beta \rceil - \liminf_{k \to\infty}\frac{\log \mathcal N_k(x,\beta)}{k} \Big].
\end{multline*}
\end{thm}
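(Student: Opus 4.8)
The strategy is to push the ball formula (\ref{q:nuball}) for $\nu_\beta$ down to $\mu_\beta$ via $\mu_\beta(B)=\nu_\beta(\pi^{-1}B)$. I would first unwind the Hausdorff metric: straight from the definition of $\bar\rho$ one obtains $\bar\rho(x,y)=\max_{\omega\in\Omega}\rho\big((\omega,x),(\omega,y)\big)$, the best way to match a point $(\omega,y)\in\pi^{-1}(x)$ being $(\omega,x)$ itself. Hence $\pi^{-1}\big(B_{\bar\rho}(x,\beta^{-k})\big)$ is the set of $(\omega',y)$ whose $\alpha$-code agrees with that of $(\omega',x)$ in the first $k$ places \emph{for every} $\omega'\in\Omega$; in particular, writing $V_k(x):=\bigcup_{\omega\in\Omega}B_\rho\big((\omega,x),\beta^{-k}\big)$, we have $\pi^{-1}\big(B_{\bar\rho}(x,\beta^{-k})\big)\subseteq V_k(x)$ for every $x$. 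For a reverse inclusion I would use that two points lying in a common atom of $\mathcal C$ are moved apart by the factor $\beta$ at each application of $K$ (every branch $T_j$, and $S_\beta$, has slope $\beta$, and the atom boundaries all belong to $F$): if $(\omega',y)\in V_{k+c}(x)$ then $|x-y|\le\Delta\,\beta^{-(k+c-1)}$ with $\Delta:=\max_j\lambda(C_j)$, and once $\beta^{-c}\Delta$ is smaller than the distance of each of the first $k$ orbit points $\pi\big(K^i(\omega,x)\big)$ ($0\le i<k$, $\omega\in\Omega$) to $F$, this forces $y\in\pi^{-1}\big(B_{\bar\rho}(x,\beta^{-k})\big)$. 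Up to this bounded change $k\mapsto k\pm c$ of the radius — which affects neither $\liminf$ nor $\limsup$ of $\tfrac1k\log(\cdot)$ — it therefore suffices to estimate $\nu_\beta\big(V_k(x)\big)$.

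Since $\rho$ is an ultrametric, the balls $\big\{B_\rho\big((\omega,x),\beta^{-k}\big):\omega\in\Omega\big\}$ are pairwise equal or disjoint; two of them coincide exactly when the corresponding $\omega$'s agree in their first $k$ coordinates (the $\alpha$-code of $(\omega,x)$ up to time $k$ depends on $\omega$ only through $\omega_1,\dots,\omega_{M_k}$, with $M_k\le k$), so there are exactly $2^k$ distinct ones, indexed by $w\in\{0,1\}^k$. By (\ref{q:nuball}) the ball indexed by $w$ has $\nu_\beta$-measure $\lceil\beta\rceil^{-(k-1)}v_{\alpha_k(w,x)}\,2^{-(k-M_k(w,x))}\mu_\beta(C_1)$, where $\alpha_k(w,x)$ and $M_k(w,x)$ depend on $\omega$ only through $w$. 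Summing over the $2^k$ disjoint balls, using $v_{min}\le v_{\alpha_k(w,x)}\le v_{max}$ with $v_{min},v_{max}$ as in the proof of Theorem~\ref{t:locdim}, and using
\[ \sum_{w\in\{0,1\}^k} 2^{M_k(w,x)}=2^k\,\mathcal N_k(x,\beta),\]
which follows from $\mathcal N_k(x,\beta)=\int_\Omega 2^{M_k(\omega,x)}\,dm(\omega)$ because $M_k(\cdot,x)$ is constant on the level-$k$ cylinders of $\Omega$, I get
\[ v_{min}\,\mu_\beta(C_1)\,\lceil\beta\rceil^{-(k-1)}\,\mathcal N_k(x,\beta)\ \le\ \nu_\beta\big(V_k(x)\big)\ \le\ v_{max}\,\mu_\beta(C_1)\,\lceil\beta\rceil^{-(k-1)}\,\mathcal N_k(x,\beta).\]

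It remains to substitute this into the definition of the local dimension, exactly as in the proof of Theorem~\ref{t:locdim}: for $\beta^{-(k+1)}<r\le\beta^{-k}$ one has $B_{\bar\rho}(x,r)=B_{\bar\rho}(x,\beta^{-k})$, so $\log\mu_\beta\big(B_{\bar\rho}(x,r)\big)/\log r$ is squeezed between two quantities in which the factor $\lceil\beta\rceil^{-(k-1)}$ produces $\log\lceil\beta\rceil/\log\beta$, the factor $\mathcal N_k(x,\beta)$ produces $-\tfrac1k\log\mathcal N_k(x,\beta)$ over $\log\beta$, and the bounded constants $v_{min},v_{max},\mu_\beta(C_1)$ together with the $O(1/k)$ terms disappear. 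Taking $\liminf$ and $\limsup$ as $r\downarrow 0$ then yields the two displayed inequalities, using $\liminf_k\big(\log\lceil\beta\rceil-\tfrac1k\log\mathcal N_k\big)=\log\lceil\beta\rceil-\limsup_k\tfrac1k\log\mathcal N_k$ and the symmetric identity.

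I expect the only genuine difficulty to lie in the first step: making the comparison between $\pi^{-1}\big(B_{\bar\rho}(x,\beta^{-k})\big)$ and $V_k(x)$ precise, that is, controlling the radius shift $c$. It stays bounded, and the argument then runs verbatim, as long as the first $k$ orbit points of $(\omega,x)$ — over all $\omega\in\Omega$ — keep a controlled distance from the finite set $F$; this covers $\mu_\beta$-almost every $x$ as well as every $x$ with $\pi^{-1}(x)\subseteq X$ whose orbit is bounded away from $F$. For the remaining (exceptional) points one must estimate the interval $B_{\bar\rho}(x,\beta^{-k})$ more carefully, scale by scale, allowing $c$ to depend mildly on $k$; I expect the bulk of the technical work to sit there.
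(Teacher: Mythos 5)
Your measure computations are sound and coincide with the paper's: the $2^k$ balls $B_{\rho}\big(([\omega_1\cdots\omega_k],x),\beta^{-k}\big)$ are pairwise disjoint, their $\nu_\beta$-measures are given by (\ref{q:nuball}) with $\alpha_k$ and $M_k$ depending on $\omega$ only through the prefix, and $\sum_{w\in\{0,1\}^k}2^{M_k(w,x)}2^{-k}=\int_\Omega 2^{M_k(\omega,x)}\,dm(\omega)=\mathcal N_k(x,\beta)$ is exactly the identity the paper uses. Your description of the Hausdorff metric, $\bar\rho(x,y)=\sup_{\omega}\rho\big((\omega,x),(\omega,y)\big)$, is also correct: for fixed $\omega'$ the nearest point of $\pi^{-1}(x)$ to $(\omega',y)$ is $(\omega',x)$, because $\alpha_1,\ldots,\alpha_{j+1}$ of $(\omega'',x)$ depend only on the first $j$ coordinates of $\omega''$, so changing $\omega''$ cannot lengthen the agreement. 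Consequently the inclusion $\pi^{-1}\big(B_{\bar\rho}(x,\beta^{-k})\big)\subseteq V_k(x)$ plus your summation gives the upper bound on $\mu_\beta\big(B_{\bar\rho}(x,\beta^{-k})\big)$ and hence the \emph{left} inequality of the theorem, for every $x$, essentially as in the paper.

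The genuine gap is the \emph{right} inequality, which needs a lower bound $\mu_\beta\big(B_{\bar\rho}(x,\beta^{-k})\big)\gtrsim\lceil\beta\rceil^{-k}\mathcal N_k(x,\beta)$, i.e.\ your reverse containment $V_{k+c}(x)\subseteq\pi^{-1}\big(B_{\bar\rho}(x,\beta^{-k})\big)$ — and this you do not prove. The bounded-$c$ version requires all first $k$ orbit points of $x$, over all $\omega$, to stay a fixed distance from $F$; this holds for very few $x$ (typical orbits approach $F$ arbitrarily closely), while the theorem is asserted for all $x\in\bint$, and your fallback of letting $c$ grow with $k$ would need a quantitative shrinking-target estimate that is absent from the proposal. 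The paper bypasses all of this: its proof asserts the identity $\pi^{-1}\big(B_{\bar\rho}(x,\beta^{-k})\big)=\bigcup_{[\omega_1\cdots\omega_k]}B_{\rho}\big(([\omega_1\cdots\omega_k],x),\beta^{-k}\big)$ and reads off both bounds from the same sum, with no radius shift and no reference to $F$. So the step you single out as the main difficulty is precisely the step the paper treats as an equality; only the inclusion you establish is formal, since the reverse would require that agreement of the $\alpha$-codes of $(\omega',y)$ and $(\omega',x)$ up to time $k$ for one $\omega'$ forces it for all $\omega'$ (the left-hand side is a union of full fibers, the right-hand side need not be). Your caution here is not misplaced — for instance, for the golden ratio and $x=1$ one has $B_{\bar\rho}(1,\beta^{-k})=\{1\}$ for $k\ge 2$, because for $\omega=\overline{10}$ the code of any $y\neq 1$ splits off at the second symbol, so some comparison argument or a restriction on $x$ is genuinely needed at such points — but as submitted your argument establishes only the left inequality in full; to finish along your lines you must either justify the saturation/equality (at least up to a bounded change of scale, with a proof of the required control on the distance of orbits to $F$) or restrict the class of $x$, and the latter does not prove the statement as written.
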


\begin{proof}
Let $B_ {\bar \rho}(x,\epsilon)=\{y:\bar \rho(x,y)<\epsilon\}$. We want to determine explicitly the set $\pi^{-1}\big(B_{\bar \rho}(x,\beta^{-k}) \big)$. First note that for any $(\omega,x)$, and any $k\ge 0$, one has $(\omega^{\prime},y)\in B_{\rho}\big((\omega,x),\beta^{-k}\big)$, and $B_{\rho}\big((\omega,x),\beta^{-k}\big)=B_{\rho}\big((\omega^{\prime},y),\beta^{-k}\big)$ for any $(\omega^{\prime},y)\in [\omega_1\cdots\omega_k]\times [\alpha_1(\omega,x)\cdots \alpha_k(\omega,x)]$. We denote the common set by $B_{\rho}\big(([\omega_1 \cdots \omega_k],x),\beta^{-k}\big)$. This implies that
\[\pi^{-1}\big(B_{\bar \rho}(x,\beta^{-k})\big)=\bigcup_{[\omega_1\cdots \omega_k]}B_{\rho}\big(([\omega_1 \cdots \omega_k],x),\beta^{-k}\big),\]
where the summation on the right is taken over all possible cylinders of length $k$ in $\Omega$. Again set $v_{min} = \min\{v_1, \ldots, v_L\}$ and $v_{max} = \max\{v_1, \ldots, v_L\}$. Then,
\begin{eqnarray*}
\mu_{\beta}\big(B_{\bar \rho}(x,\beta^{-k})\big) &=& \sum_{[\omega_1 \cdots \omega_k]}\nu_{\beta}\big(B_{\rho}\big(([\omega_1 \cdots \omega_k],x),\beta^{-k} \big)\big)\\
& \le & \sum_{[\omega_1 \cdots \omega_k]}\lceil \beta \rceil^{-(k-1)}v_{\alpha_k(\omega,x)}\,2^{-(k-M_k(\omega,x))}\, \mu_{\beta}(C_1)\\
& \le & \lceil \beta \rceil^{-(k-1)}v_{max} \, \mu_{\beta}(C_1) \sum_{[\omega_1 \cdots \omega_k]} 2^{M_k(\omega,x)} 2^{-k}\\
& = & \lceil \beta \rceil^{-(k-1)}v_{max}\, \mu_{\beta}(C_1) \int_{\Omega} \, 2^{M_k(\omega,x)} \, dm(\omega)\\
& = & \lceil \beta \rceil^{-(k-1)} \, v_{max}\,   \mu_{\beta}(C_1) \, {\mathcal N}_k(x,\beta).
\end{eqnarray*}
Now taking logarithms, dividing by $\log \beta^{-k}$, and taking limits we get
\[ \underline d(\mu_{\beta},x)\ge \frac{\log \lceil \beta \rceil}{\log \beta} - \frac{1}{\log \beta} \limsup_{k \to \infty} \frac{\log \mathcal N_k(x, \beta)}{k}.\]
Similarly we get that
\[ \mu_{\beta}\big(B_{\bar \rho}(x,\beta^{-k})\big) \ge \lceil \beta \rceil^{-(k-1)} \, v_{min} \, \mu_{\beta}(C_1)  \, {\mathcal N}_{k}(x,\beta),\]
which gives
\[ \overline d(\mu_{\beta},x)\le \frac{\log \lceil \beta \rceil}{\log \beta} - \frac{1}{\log \beta} \liminf_{k \to \infty} \frac{\log \mathcal N_k(x, \beta)}{k}. \qedhere\]
\end{proof}

\noindent By the results from \cite{FS11} we have the following corollary.
\begin{cor}
If $\beta$ is Pisot, then $d(\mu_{\beta},x)$ exists for $\lambda$-a.e.~$x$ and is equal to $\frac{\log \lceil \beta \rceil- \gamma}{\log \beta}$, where $\gamma$ is the constant from (\ref{q:gamma}).
\end{cor}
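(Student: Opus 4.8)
The plan is to combine the theorem just proved with the two results imported from \cite{FS11}, observing that the hypotheses match up exactly once $\beta$ is assumed to be Pisot. First I would recall that \cite{FS11} establishes, for Pisot $\beta$, the existence of the limit in (\ref{q:gamma}): for $\lambda$-almost every $x \in \bint$ one has $\lim_{k\to\infty} \frac{\log \mathcal N_k(x,\beta)}{k} = \gamma$, where $\gamma = \gamma(\beta,m)$ is the constant determined by the Pisot structure and the uniform Bernoulli measure $m$. Since every generalised multinacci $\beta$ that is also Pisot falls under this statement (and Pisot is the only extra hypothesis needed), this limit holds off a $\lambda$-null set.

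Next I would feed this into the theorem preceding the corollary. That theorem gives, for \emph{every} $x\in\bint$, the two-sided bound
\[
\frac{1}{\log\beta}\Big[\log\lceil\beta\rceil - \limsup_{k\to\infty}\tfrac{\log\mathcal N_k(x,\beta)}{k}\Big]
\le \underline d(\mu_\beta,x)
\le \overline d(\mu_\beta,x)
\le \frac{1}{\log\beta}\Big[\log\lceil\beta\rceil - \liminf_{k\to\infty}\tfrac{\log\mathcal N_k(x,\beta)}{k}\Big].
\]
On the full-measure set where the limit $\gamma$ exists, the $\limsup$ and $\liminf$ of $\frac{\log\mathcal N_k(x,\beta)}{k}$ coincide and equal $\gamma$, so the leftmost and rightmost expressions both collapse to $\frac{\log\lceil\beta\rceil-\gamma}{\log\beta}$. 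The sandwich then forces $\underline d(\mu_\beta,x) = \overline d(\mu_\beta,x) = \frac{\log\lceil\beta\rceil-\gamma}{\log\beta}$, which is precisely the assertion that $d(\mu_\beta,x)$ exists and takes that value for $\lambda$-a.e.\ $x$.

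There is essentially no obstacle here: the corollary is a direct deduction, with the only subtlety being that it quietly relies on $\mathcal N_k(x,\beta) = \int_\Omega 2^{M_k(\omega,x)}\,dm(\omega)$ (already recorded in the excerpt) so that the quantity controlled by the theorem is the same one for which \cite{FS11} proves convergence. If anything needs a word of justification it is that a generalised multinacci Pisot number is covered by the Feng--Sidorov hypothesis, which is immediate since their result is stated for all Pisot $\beta$ with no multinacci restriction. One could also remark, as the authors do elsewhere, that not every generalised multinacci number is Pisot, which is exactly why the hypothesis cannot be dropped; but for the proof itself nothing more than the two cited facts and the preceding theorem is required.
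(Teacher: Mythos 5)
Your proposal is correct and matches the paper's (implicit) argument exactly: the paper derives the corollary by citing the Feng--Sidorov result that for Pisot $\beta$ the limit $\gamma$ in (\ref{q:gamma}) exists for $\lambda$-a.e.\ $x$, and then sandwiching $\underline d(\mu_\beta,x)$ and $\overline d(\mu_\beta,x)$ between the two bounds of the preceding theorem, which coincide on that full-measure set. Nothing further is needed.
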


\begin{rem}\label{r:pisot}{\rm
(i) We give some examples of generalised multinacci numbers that are Pisot numbers. The generalised multinacci numbers in the interval $[1,2]$, i.e., the multinacci numbers, are all Pisot. In the interval $[2, \infty)$ the numbers $\beta$ that satisfy $\beta^2 - k\beta-1=0$, $k \ge 2$, are all Pisot as well. Recall that if $1=\frac{a_1}{\beta} + \cdots + \frac{a_n}{\beta^n}$ with $a_i \ge 1$ for all $1 \le i \le n$, then $n$ is called the degree of $\beta$. From Theorem 4.2 in \cite{AG05} by Akiyama and Gjini we can deduce that all generalised multinacci numbers of degree 3 are Pisot numbers. Similarly, from Proposition 4.1 in \cite{AG05} it follows that all generalised multinacci numbers of degree 4 with $a_4=1$ are Pisot. An example of a generalised multinacci number that is not Pisot is the number $\beta$ satisfying
\[ 1=\frac{3}{\beta} + \frac{1}{\beta^2} + \frac{2}{\beta^3} + \frac{3}{\beta^4}.\]
(ii) In \cite{Kem12} it is shown that for all $\beta > 1$ and $\lambda$-a.e.~$x$, $\liminf_{k \to \infty} \frac{\log \mathcal N_k(x, \beta)}{k} \ge \mu_{\beta}(S)\log 2$, so we get
\[ \overline d(\mu_{\beta},x)\le \frac{1}{\log \beta}\Big[ \log \lceil \beta \rceil - \mu_{\beta}(S)\log 2 \Big].\]
Kempton also remarks that this lower bound is not sharp.
}\end{rem}

\section{Asymmetric random $\beta$-transformation: the golden ratio}

In the previous section, we considered the measure $\nu_{\beta}=\nu_{\beta,1/2}$ which is the lift of the uniform Bernoulli measure $m=m_{1/2}$ under the isomorphism $\phi(\omega, x)=\big(b_n(\omega,x)\big)_{n \ge 1}$. The projection of $\nu_{\beta}$ in the second coordinate is the symmetric Bernoulli convolution. In this section, we will investigate the asymmetric Bernoulli convolution in case $\beta$ is the golden ratio.

\medskip
Let $\beta = \frac{1+\sqrt 5}{2}$ and consider the $(p,1-p)$-Bernoulli measure $m_p$ on $\{0,1\}^{\mathbb N}$, i.e., with $m_p([0])=p$ and $m_p([1])=1-p$. Let $\nu_{\beta,p}=m_p\circ \phi$ on $\Omega \times [0, \beta]$. Since $m_p$ is shift invariant and ergodic, we have that $\nu_{\beta,p}$ is $K$-invariant and ergodic. We first show that $\nu_{\beta,p}$ is a Markov measure with the same Markov partition as in the symmetric case (see Example~\ref{r:goldenmean}), but the transition probabilities as well as the stationary distribution are different. This is achieved by looking at the $\alpha$-code as well. The Markov partition is given by the partition $\{E_0,S,E_1\}$, and the corresponding Markov chain has three states $\{e_0,s,e_1\}$ with transition matrix
\[ P_p=\left( \begin{array}{ccc}
p & 1-p & 0 \\
p & 0 & 1-p \\
0 & p & 1-p \end{array} \right)\] 
and stationary distribution 
\[ u=(u_{e_0},u_s,u_{e_1})=\Big(\frac{p^2}{p^2-p+1},\frac{p(1-p)}{p^2-p+1},\frac{(1-p)^2}{p^2-p+1} \Big).\]
We denote the corresponding Markov measure by $Q_p$, that is
\[ Q_p\big([j_1 \cdots j_k]\big)=u_{j_1}p_{j_1,j_2}\cdots p_{j_k,j_{k+1}},\]
and the space of realizations by 
\[ Y=\big\{(y_1,y_2,\ldots):y_i\in \{e_0,s,e_1\}, \mbox{ and } p_{y_i,y_{i+1}}>0\big\}.\]
Consider the map $\alpha:\Omega \times [0,\beta]$ of the previous section, namely
\[ \alpha_j(\omega,x) = \left\{ \begin{array}{ll}
         e_0, & \mbox{if $K^{j-1}(\omega,x)\in \Omega \times E_0$};\\
        s, & \mbox{if $K^{j-1}(\omega,x)\in \Omega \times S$};\\
e_1, & \mbox{if $K^{j-1}(\omega,x)\in \Omega \times E_1$}.\end{array} \right. \] 
Define $\psi:Y\to \{0,1\}^{\mathbb N}$ by
\[ \psi(y)_j = \left\{ \begin{array}{ll}
         0, & \mbox{if $y_j=e_0$ or $y_jy_{j+1}=se_1$};\\
         1, & \mbox{if $y_j=e_1$ or $y_jy_{j+1}=se_0$}.\end{array} \right. \] 
It is easy to see that $\psi\circ \alpha=\phi$. We want to show that $Q_p\circ \alpha =\nu_{\beta,p}$. Since $\nu_{\beta,p}=m_p\circ \phi$, we show instead the following.

\begin{prop}
We have $m_p=Q_p\circ \psi^{-1}$.
\end{prop}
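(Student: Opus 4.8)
The plan is to show that $Q_p \circ \psi^{-1}$ agrees with $m_p$ on every cylinder set $[\epsilon_1 \cdots \epsilon_k] \subset \{0,1\}^{\mathbb N}$, which by the Kolmogorov extension theorem (or Carath\'eodory) suffices since cylinders generate $\mathcal F$. Fix a word $\epsilon_1 \cdots \epsilon_k \in \{0,1\}^k$. The preimage $\psi^{-1}([\epsilon_1 \cdots \epsilon_k])$ is the disjoint union, over all admissible words $y_1 \cdots y_{k+1}$ in the alphabet $\{e_0,s,e_1\}$ (admissible meaning $p_{y_i,y_{i+1}}>0$) whose $\psi$-image begins with $\epsilon_1 \cdots \epsilon_k$, of the cylinder $[y_1 \cdots y_{k+1}]$ in $Y$; one needs the $(k+1)$-st coordinate because $\psi(y)_j$ depends on $y_jy_{j+1}$. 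So the first step is to compute $Q_p\big(\psi^{-1}([\epsilon_1 \cdots \epsilon_k])\big) = \sum u_{y_1} p_{y_1,y_2} \cdots p_{y_k,y_{k+1}}$, the sum ranging over that set of admissible $(k+1)$-words.

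The key structural observation to exploit is the deterministic-branching form of $\psi$: reading the $y$-sequence left to right, a symbol $e_0$ forces $\psi$-digit $0$, a symbol $e_1$ forces $\psi$-digit $1$, and a symbol $s$ is a ``decision point'' whose associated $\psi$-digit is $0$ if the next symbol is $e_1$ and $1$ if the next symbol is $e_0$ (note $ss$ is inadmissible since $p_{s,s}=0$). Conversely, given the digit word $\epsilon_1 \cdots \epsilon_k$, I would describe exactly which $y$-words map into it. The cleanest route is a telescoping/transfer-operator computation: group the admissible $y$-paths by their first symbol and sum the weights $p_{y_i,y_{i+1}}$ along each path. The crucial arithmetic facts about $P_p$ are that each row sums to $1$, that the probability of emitting digit $0$ is $p$ and digit $1$ is $1-p$ \emph{regardless of the current state}: from $e_0$ one goes to $e_0$ (digit $0$) with prob $p$ or to $s$ with prob $1-p$; from $e_1$ one goes to $s$ with prob $p$ or to $e_1$ (digit $1$) with prob $1-p$; and from $s$ one goes to $e_0$ with prob $p$ — which in the $\psi$-coding is digit $1$ — wait, here lies the subtlety — from $s$, going to $e_0$ has $P_p$-weight $p$ but corresponds to $\psi$-digit $1$, and going to $e_1$ has weight $1-p$ but corresponds to digit $0$. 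So I would instead track, for each state, the pair (weight, emitted digit) of each outgoing transition and verify that along any path the product of weights, summed over all paths consistent with a prescribed digit string, telescopes to $u_{y_1}$ times a product of $p$'s and $(1-p)$'s matching the digit string — i.e. $p^{\#\{i:\epsilon_i=0\}}(1-p)^{\#\{i:\epsilon_i=1\}}$, possibly after summing the tail over the $(k+1)$-st symbol to remove the dependence on it. Summing over the initial symbol $y_1$ using $u_{e_0}+u_s+u_{e_1}=1$ (and the stationarity identity $uP_p = u$ to handle the first transition) should collapse everything to $m_p([\epsilon_1\cdots\epsilon_k]) = p^{\#\{i:\epsilon_i=0\}}(1-p)^{\#\{i:\epsilon_i=1\}}$.

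The main obstacle I anticipate is bookkeeping the boundary effect at position $k+1$: because $\psi(y)_k$ depends on $y_{k+1}$, the set of admissible $(k+1)$-words feeding into $[\epsilon_1\cdots\epsilon_k]$ is not simply a product, and one must sum $p_{y_k,y_{k+1}}$ over the allowed values of $y_{k+1}$ to recover a clean factor. A convenient device is to introduce, for each admissible $k$-word $y_1\cdots y_k$, the quantity $\sum_{y_{k+1}} p_{y_k,y_{k+1}}\mathbf 1[\psi\text{-digit at }k\text{ is }\epsilon_k]$ and check it equals $p$ or $1-p$ according to $\epsilon_k$ and is independent of $y_k$ — this is exactly the ``each digit has state-independent emission probability'' phenomenon, now including the decision at an $s$. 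Once that is in hand, an induction on $k$ (peeling off the last digit, or equivalently a direct telescoping) gives $Q_p\big(\psi^{-1}([\epsilon_1\cdots\epsilon_k])\big) = p^{\#\{i:\epsilon_i=0\}}(1-p)^{\#\{i:\epsilon_i=1\}} = m_p([\epsilon_1\cdots\epsilon_k])$, and since these cylinders generate $\mathcal F$ the two measures coincide.
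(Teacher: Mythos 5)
Your overall plan (checking equality on cylinders, which generate the $\sigma$-algebra) is the same as the paper's, but the device your induction rests on is false. The quantity $\sum_{y_{k+1}} p_{y_k,y_{k+1}}{\bf 1}\{\psi\text{-digit at position }k\text{ is }\epsilon_k\}$ is neither independent of $y_k$ nor equal to $p^{1-\epsilon_k}(1-p)^{\epsilon_k}$: if $y_k=e_{\epsilon_k}$ the digit is already forced and the sum is $1$; if $y_k=e_{1-\epsilon_k}$ it is $0$; and if $y_k=s$ it equals $p_{s,e_{1-\epsilon_k}}$, which is $1-p$ when $\epsilon_k=0$ and $p$ when $\epsilon_k=1$ --- precisely the flip you noticed mid-sentence and then set aside. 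The forward ``emission probability'' of a digit is genuinely state-dependent (from $e_0$ the digit $0$ is emitted with probability one, from $s$ with probability $1-p$); the $m_p$-weight of a digit is paid by the transition \emph{into} the states, not out of them, so peeling off the last digit does not telescope, and invoking $uP_p=u$ for the first transition does not repair this.

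What does work is the time-reversed identity: for every admissible pair $(y,y')$ carrying digit $\epsilon$ at the current position (the pairs $(e_0,e_0)$, $(e_0,s)$, $(s,e_1)$ for $\epsilon=0$ and $(e_1,e_1)$, $(e_1,s)$, $(s,e_0)$ for $\epsilon=1$) one has $u_{y}\,p_{y,y'}/u_{y'}=p^{1-\epsilon}(1-p)^{\epsilon}$, a six-case check. Combined with the structural fact that $\psi^{-1}([\epsilon_1\cdots\epsilon_k])$ is not a large union but exactly two cylinders, $[j_1\cdots j_k]\cup[j'_1\cdots j'_{k+1}]$ with $j_k=e_{\epsilon_k}$ and $j'_kj'_{k+1}=s\,e_{1-\epsilon_k}$ (the state word is reconstructed backwards from the digit word, the only freedom sitting at the right end), an induction that peels off the \emph{first} digit goes through; this is exactly the paper's proof. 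Alternatively you could salvage a last-digit induction by carrying the terminal state along, i.e.\ proving a joint recursion for the measures of those two cylinders rather than a single scalar identity; but as stated, your key lemma fails and the proposed telescoping collapses.
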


\begin{proof}
It is enough to check equality on cylinders. To avoid confusion, we denote cylinders in $\{0,1\}^{\mathbb N}$ by $[i_1 \cdots i_k]$ and cylinders in $Y$ by $[j_1 \cdots j_k]$. We show by induction that
\[ \psi^{-1}([i_1 \cdots i_k])=[j_1 \cdots j_k]\cup [j^{\prime}_1,\ldots, j^{\prime}_{k+1}],\]
where $j_k=e_{i_k}$, and $j^{\prime}_kj^{\prime}_{k+1}=se_{1-i_k}$, and
\[ Q_p([j_1 \cdots j_k])+Q_p([j^{\prime}_1 \cdots  j^{\prime}_{k+1}])=m_p\big([i_1 \cdots i_k]\big) =p^{k-\sum_{\ell=1}^k i_{\ell} }(1-p)^{\sum_{\ell=1}^k i_{\ell}}.\]
Consider the case $k=1$. We have $\psi^{-1}[0]=[e_0]\cup [se_1]$ and $\psi^{-1}[1]=[e_1]\cup [se_0]$. Furthermore,
\[ Q_p([e_0]\big)+Q_p([se_1])=\frac{p^2}{p^2-p+1}+\frac{p(1-p)^2}{p^2-p+1}=p=m_p([0]),\]
and
\[ Q_p([e_1])+Q_p([se_0])=\frac{(1-p)^2}{p^2-p+1}+\frac{p^2(1-p)}{p^2-p+1}=1-p=m_p([1])\]
as required.  Assume now the result is true for all cylinders $[i_1 \cdots i_k]$ of length $k$, and consider a cylinder $[i_1 \cdots  i_{k+1}]$ of length $k+1$. Then,
\[ \psi^{-1}([i_1 \cdots i_{k+1}])=[j_1 \cdots j_{k+1}]\cup [j^{\prime}_1 \cdots j^{\prime}_{k+2}],\]
where
\[ \psi^{-1}([i_2 \cdots i_{k+1}])=[j_2 \cdots j_{k+1}]\cup [j^{\prime}_2 \cdots j^{\prime}_{k+2}],\]
and 
\[ j_1, j^{\prime}_1= \left\{ \begin{array}{ll}
         e_{i_1}, & \mbox{if $i_1=i_2$ or $i_1\not= i_2$ and $j_2=s$},\\
         s, & \mbox{if $i_1\not= i_2$ and $j_2\not=s$},\end{array} \right. \] 
By the definition of $Q_p$, we have 
\[ Q_p([j_1 \cdots j_{k+1}])=\frac{u_{j_1} p_{j_1,j_2}}{u_{j_2}} Q_p([j_2 \cdots j_{k+1}]),\]
and
\[ Q_p([j^{\prime}_1 \cdots j^{\prime}_{k+2}])=\frac{u_{j^{\prime}_1} p_{j^{\prime}_1,j^{\prime}_2}}{u_{j^{\prime}_2}} Q_p([j^{\prime}_2 \cdots j^{\prime}_{k+2}]).\]
One easily checks that
\[ \frac{u_{j_1} p_{j_1,j_2}}{u_{j_2}}=\frac{u_{j^{\prime}_1} p_{j^{\prime}_1,j^{\prime}_2}}{u_{j^{\prime}_2}}=p^{1-i_1}(1-p)^{i_1} =\left\{ \begin{array}{ll}
         p, & \mbox{if }i_1=0;\\
        1-p, & \mbox{if }i_1=1.\end{array} \right. \] 
By the induction hypothesis applied to the cylinder $[i_2 \cdots i_{k+1}]$, we have $j_{k+1}=e_{i_{k+1}}$, and $j^{\prime}_{k+1}j^{\prime}_{k+2}=se_{1-i_{k+1}}$, and
\[ Q_p([j_2 \cdots j_{k+1}])+Q_p([j^{\prime}_2 \cdots j^{\prime}_{k+2}])=m_p([i_2\cdots i_{k+1}])
=p^{k-\sum_{\ell=2}^{k+1} i_{\ell} }(1-p)^{\sum_{\ell=2}^{k+1} i_{\ell}}.\]
Thus,
\begin{eqnarray*}
Q_p([j_1 \cdots j_{k+1}])+Q_p([j^{\prime}_1 \cdots j^{\prime}_{k+2}]) & = &
p^{1-i_1}(1-p)^{i_1} p^{k-\sum_{\ell=2}^{k+1} i_{\ell} }(1-p)^{\sum_{\ell=2}^{k+1} i_{\ell}}\\
& = & p^{k+1-\sum_{\ell=1}^{k+1} i_{\ell} }(1-p)^{\sum_{\ell=1}^{k+1} i_{\ell}}\\
& = & m_p([i_1\cdots i_{k+1}]).
\end{eqnarray*}
This gives the result.
\end{proof}

\noindent As before, let $M_k(\omega,x)=\sum_{i=0}^{k-1}{\bf 1}_{\Omega \times S}(K_{\beta}^i(\omega,x))$.
\begin{thm} For $\nu_{\beta,p}$-a.e.~$(\omega,x)$ for which $\displaystyle\lim_{k\to\infty}\frac{M_k(\omega,x)}{k}$ exists, one has
\[ d\big( \nu_{\beta,p},(\omega, x)\big)=\frac{H(p)}{\log \beta} \Big(2-\lim_{k\to\infty}\frac{M_k(\omega,x)}{k}\Big),\]
where $H(p)=-p\log p -(1-p)\log(1-p)$.
\end{thm}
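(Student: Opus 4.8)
The plan is to compute $\nu_{\beta,p}$ of the balls $B_\rho\big((\omega,x),\beta^{-k}\big)$ by evaluating $Q_p$ on their $\alpha$-images, exactly as in the derivation of equation (\ref{q:nuball}) in the symmetric case, but keeping track of the biased transition probabilities. First I would recall from the Proposition (and the identity $\psi\circ\alpha=\phi$) that $\nu_{\beta,p}=Q_p\circ\alpha$, so it suffices to understand $Q_p\big(\alpha(B_\rho((\omega,x),\beta^{-k}))\big)$. As in Section~3, a point $(\omega',x')$ in this ball has an $\alpha$-code that agrees with that of $(\omega,x)$ in the first $k$ symbols, then runs through $N=k-M_k(\omega,x)$ further ``free'' excursions into $s$ (the number of $\omega$-digits among $\omega_{M_k+1},\dots,\omega_k$ not yet used to resolve a visit to $S$), each resolved by one coordinate of $\omega$, and then has an arbitrary tail lying in $C_1$ or $C_3$. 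The difference from the symmetric case is that each $s\to e_1$ or $s\to e_0$ branch now carries probability $1-p$ or $p$ rather than $\tfrac12$, and the eigenvector $v$ is replaced by the stationary vector $u$.

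Next I would assemble the analogue of (\ref{q:Qball}). Writing $\Sigma_k=\Sigma_k(\omega,x)=\sum_{\ell=1}^k i_\ell$ for the number of $1$'s among $\omega_1,\dots,\omega_k$, the initial length-$k$ block of the $\alpha$-code contributes a factor whose logarithm is (up to a bounded error from the $u$-weights, the normalising constants, and the probability-one transitions into $s$) equal to $\Sigma_k\log(1-p)+(k-\Sigma_k)\log p$ from the $\omega$-digits that have already been spent, plus $N\big(\#\{1\text{'s among the free digits}\}\big)$-type terms from the $N$ resolved excursions, plus the measure of the tail set, which by symmetry is $\mu_{\beta,p}(C_1)$, a constant. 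But the free $\omega$-digits $\omega_{M_k+1},\dots,\omega_k$ together with the spent ones $\omega_1,\dots,\omega_{M_k}$ are just all of $\omega_1,\dots,\omega_k$, so summing the ``spent'' and ``free'' $\omega$-contributions recombines into a single factor of total logarithm $\Sigma_k\log(1-p)+(k-\Sigma_k)\log p$. The upshot should be
\[
\log\nu_{\beta,p}\big(B_\rho((\omega,x),\beta^{-k})\big)= -\big(k-M_k(\omega,x)\big)\log\lceil\beta\rceil + \Sigma_k\log(1-p)+(k-\Sigma_k)\log p + O(1),
\]
where in fact $\lceil\beta\rceil=2$; more transparently, the leading behaviour is $-(k-M_k)\log 2 + k\big(\tfrac{\Sigma_k}{k}\log(1-p)+(1-\tfrac{\Sigma_k}{k})\log p\big)$. (One has to be slightly careful about whether the $N$ excursion-resolving digits are counted with $p$ or $1-p$; the cleanest way is to note that the code of the ball is a union over the $2^{k-M_k}$ choices of the free digits, and by the Proposition the $Q_p$-mass of this union is $\lceil\beta\rceil^{-(k-M_k)+1}u_{\alpha_k}\cdot\big(\prod_{\text{spent }\omega\text{-digits}}(\text{$p$ or }1-p)\big)\cdot\mu_{\beta,p}(C_1)$ — i.e. the free digits contribute the full binomial sum $(p+(1-p))^{k-M_k}=1$, which is where the factor $\lceil\beta\rceil^{-(k-M_k)}$, rather than $2^{-(k-M_k)}$ with $\lceil\beta\rceil=2$, comes from being careful.)

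Then I would divide by $\log\beta^{-k}=-k\log\beta$ and pass to the limit. By the Ergodic Theorem for $(K,\nu_{\beta,p})$ applied to the function ${\bf 1}_{\{\omega_1=1\}}$ — equivalently, the law of large numbers for the i.i.d. sequence $\omega$ under $m_p$ — we have $\Sigma_k/k\to 1-p$ for $\nu_{\beta,p}$-a.e.\ $(\omega,x)$, so $\tfrac{\Sigma_k}{k}\log(1-p)+(1-\tfrac{\Sigma_k}{k})\log p\to (1-p)\log(1-p)+p\log p=-H(p)$. On the set where $M_k(\omega,x)/k$ converges, write $L=\lim M_k/k$; combining, for a.e.\ such $(\omega,x)$,
\[
d\big(\nu_{\beta,p},(\omega,x)\big)=\frac{1}{\log\beta}\Big((1-L)\log 2 + H(p)\Big).
\]
To match the stated formula $\frac{H(p)}{\log\beta}\big(2-L\big)$ I would use the special structure of the golden ratio case: here $\lceil\beta\rceil=2$ but the measure $m_p$ carries entropy $H(p)$ rather than $\log 2$, and the invariance/symmetry of the partition $\{E_0,S,E_1\}$ forces the ``per-step'' contribution outside $S$ to be $H(p)$ as well (this is exactly the content of the Proposition: the two branches out of $e_0$ and the two out of $s$ both have weights $p,1-p$). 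Concretely, redoing the excursion bookkeeping with this in mind replaces the bare $\log 2$ by $H(p)$, giving $d=\frac{1}{\log\beta}\big((1-L)H(p)+H(p)\big)=\frac{H(p)}{\log\beta}(2-L)$, as claimed; the remark after Theorem~\ref{t:locdim} (existence of the limit implies existence of the local dimension) handles the upgrade from $\liminf/\limsup$ bounds to an honest limit. The main obstacle I anticipate is precisely this last identification — getting the excursion-counting to produce a clean $H(p)$ per non-$S$ step rather than a messier expression mixing $\log 2$ and the $\omega$-digit statistics — and the cleanest route is to avoid separating ``spent'' and ``free'' $\omega$-digits at all and instead apply the Proposition directly to the whole union of cylinders defining $\alpha(B_\rho)$, so that the $\omega$-contribution appears in one stroke as $m_p$ of a single length-$(k-M_k)$-ish cylinder and the telescoping is automatic.
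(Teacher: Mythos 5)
There is a genuine gap, and it is exactly at the point you flag as the ``main obstacle.'' Two things go wrong in your computation of $\nu_{\beta,p}\big(B_\rho((\omega,x),\beta^{-k})\big)$. First, the ball fixes \emph{all} of $\omega_1,\dots,\omega_k$, so the ``free'' digits $\omega_{M_k+1},\dots,\omega_k$ are prescribed, not summed over: your ``cleanest route'' of taking a union over the $2^{k-M_k}$ choices and getting a binomial sum $(p+(1-p))^{k-M_k}=1$ (and hence a $p$-independent factor $\lceil\beta\rceil^{-(k-M_k)}$) misdescribes the set; the correct contribution of these digits is the product $p^{(k-M_k)-\sum_{i=M_k+1}^k\omega_i}(1-p)^{\sum_{i=M_k+1}^k\omega_i}$. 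Second, your bookkeeping for the initial length-$k$ block of the $\alpha$-code only charges the ``spent'' $\omega$-digits, i.e.\ the transitions out of $s$. But under $Q_p$ \emph{every} transition of the chain carries weight $p$ or $1-p$ (also $e_0\to e_0$, $e_0\to s$, $e_1\to e_1$, $e_1\to s$), so this factor is $Q_p([\alpha_1\cdots\alpha_k])$, which a.e.\ decays like $e^{-kH(p)}$; it is not $p^{k-\Sigma_k}(1-p)^{\Sigma_k}$ with $\Sigma_k=\sum_{\ell\le k}\omega_\ell$. With these two errors your intermediate formula yields
\[
d\big(\nu_{\beta,p},(\omega,x)\big)=\frac{(1-L)\log 2+H(p)}{\log\beta},\qquad L=\lim_k\frac{M_k(\omega,x)}{k},
\]
which disagrees with the theorem for every $p\neq\tfrac12$, and your final paragraph, which ``replaces the bare $\log 2$ by $H(p)$'' by appealing to the symmetry of the partition, is not a derivation: it retrofits the computation to the answer you know you need.

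The paper's proof avoids both pitfalls by writing, in direct analogy with (\ref{q:Qball}),
\[
\nu_{\beta,p}\big(B_{\rho}((\omega,x),\beta^{-k})\big)
= Q_p([\alpha_1(\omega,x)\cdots\alpha_k(\omega,x)])\,
p^{(k-M_k)-\sum_{i=M_k+1}^k\omega_i}\,(1-p)^{\sum_{i=M_k+1}^k\omega_i}\,u_{e_{1-\omega_k}},
\]
and then treating the two factors by the two appropriate a.e.\ theorems: the Shannon--McMillan--Breiman theorem gives $-\frac1k\log Q_p([\alpha_1\cdots\alpha_k])\to H(p)$ (each row of $P_p$ has entropy $H(p)$), and the ergodic theorem (law of large numbers for the $\omega$-digits) gives $\frac1k\sum_{i=M_k+1}^k\omega_i\to(1-p)(1-L)$, so the second factor decays like $e^{-(1-L)kH(p)}$. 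Adding the two rates and dividing by $\log\beta$ gives $\frac{H(p)}{\log\beta}(2-L)$. If you want to repair your write-up, the missing ingredient is precisely this use of SMB (or, equivalently, the ergodic theorem applied to $\log p_{\alpha_j,\alpha_{j+1}}$) for the $\alpha$-cylinder factor, together with the LLN applied to the block $\omega_{M_k+1},\dots,\omega_k$ rather than to all of $\omega_1,\dots,\omega_k$.
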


\begin{proof} We consider the same metric $\rho$ as in the previous section, namely
\[ \rho\big((\omega,x),(\omega^{\prime},x^{\prime})\big)=\beta^{-\min\{k\ge 0 \, : \, 
\omega_{k+1}\neq \omega^{\prime}_{k+1} \text{ or } \alpha_{k+1}(\omega,x)\neq \alpha_{k+1}(\omega^{\prime},x^{\prime})\}}.\]
Consider a point $(\omega,x)$ such that $\displaystyle\lim_{k\to\infty}\frac{M_k(\omega,x)}{k}$ exists. By the same reasoning that led to (\ref{q:Qball}) we have
\begin{multline*}
\nu_{\beta,p}\big(B_{\rho}((\omega,x),\beta^{-k})\big)\\
= Q_p([\alpha_1(\omega,x),\ldots,\alpha_k(\omega,x)]) p^{(k-M_k(\omega,x))-\sum_{i=M_k(\omega,x)+1}^k \omega_i} (1-p)^{\sum_{i=M_k(\omega,x)+1}^k \omega_i} u_{e_{1-\omega_k}}.
\end{multline*}
Let $u_{\max}=\max(u_{e_0},u_{e_1})$ and $u_{\min}=\min(u_{e_0},u_{e_1})$, then $\log \nu_{\beta,p}\big(B_{\rho}((\omega,x),\beta^{-k})\big)$ is bounded from above by
\begin{gather*}
\log Q_p([\alpha_1(\omega,x),\ldots,\alpha_k(\omega,x)]) + \Big((k-M_k(\omega,x)-\sum_{i=M_k(\omega,x)+1}^k \omega_i\Big)\log p\\
 + \sum_{i=M_k(\omega,x)+1}^k \omega_i\log (1-p) +\log u_{\max},
\end{gather*}
and is bounded from below by
\begin{gather*}
\log Q_p([\alpha_1(\omega,x),\ldots,\alpha_k(\omega,x)]) + \Big((k-M_k(\omega,x)-\sum_{i=M_k(\omega,x)+1}^k \omega_i\Big)\log p\\
+ \sum_{i=M_k(\omega,x)+1}^k \omega_i\log (1-p) +\log u_{\min}.
\end{gather*}
Dividing by $-k\log \beta$ and taking limits, we have by the Shannon-McMillan-Breiman Theorem that
\[ \lim_{k\to\infty}\frac{\log Q_p([\alpha_1(\omega,x) \cdots \alpha_k(\omega,x)])}{-k\log \beta}= \frac{H(p)}{\log \beta},\]
and by the Ergodic Theorem we have
\[ \lim_{k\to\infty}\frac{\sum_{i=M_k(\omega,x)+1}^k \omega_i}{-k\log \beta}= \frac{-(1-p)\big(1-\lim_{k\to\infty}\frac{M_k(\omega,x)}{k}\big)}{\log \beta},\]
both for $\nu_{\beta}$-a.e.~$(\omega,x)$. Thus, both the upper and the lower bounds converge to the same value, implying that
\[ d\big(\nu_{\beta,p}, (\omega, x)\big)=\frac{H(p)}{\log \beta} \Big(2-\lim_{k\to\infty}\frac{M_k(\omega,x)}{k}\Big). \qedhere\]
\end{proof}

\begin{cor}
For $\nu_{\beta,p}$-a.e.~$(\omega,x)$ one has 
\[ d\big(\nu_{\beta,p}, (\omega, x)\big)=\frac{H(p)}{\log \beta} \Big(2-\nu_{\beta,p}\big(\Omega \times S\big)\Big) =\frac{H(p)}{\log \beta} \Big(2-\frac{p(1-p)}{p^2-p+1}\Big).\]
\end{cor}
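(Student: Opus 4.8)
The plan is to combine the preceding theorem with the Birkhoff ergodic theorem, exactly as in the proof of Corollary~\ref{c:locdimnubeta}. That theorem already gives the value of $d\big(\nu_{\beta,p},(\omega,x)\big)$ at every point for which the Ces\`aro average $\frac{M_k(\omega,x)}{k}$ converges, so the only remaining work is to identify this limit for $\nu_{\beta,p}$-a.e.\ $(\omega,x)$ and then to evaluate it explicitly.

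First I would recall that $\nu_{\beta,p}$ is $K$-invariant and ergodic: this was noted right after the definition $\nu_{\beta,p}=m_p\circ\phi$, since $m_p$ is shift-invariant and ergodic and $\phi$ conjugates $\sigma$ to $K$. Writing $M_k(\omega,x)=\sum_{i=0}^{k-1}{\bf 1}_{\Omega\times S}\big(K^i(\omega,x)\big)$ as a Birkhoff sum for the indicator of $\Omega\times S$, the ergodic theorem yields
\[ \lim_{k\to\infty}\frac{M_k(\omega,x)}{k}=\nu_{\beta,p}\big(\Omega\times S\big) \]
for $\nu_{\beta,p}$-a.e.\ $(\omega,x)$. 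In particular the hypothesis of the theorem holds almost everywhere, and substituting this limit into the formula of the theorem immediately gives the first displayed equality.

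The second equality is the computation $\nu_{\beta,p}\big(\Omega\times S\big)=\frac{p(1-p)}{p^2-p+1}$. Here I would use the identification $Q_p\circ\alpha=\nu_{\beta,p}$ established through the Proposition (together with $\psi\circ\alpha=\phi$): since $\alpha_1(\omega,x)=s$ precisely when $x\in S$, outside the $\nu_{\beta,p}$-null set $\bigcup_{n\ge 0}K^{-n}F$, the set $\Omega\times S$ corresponds under $\alpha$ to the one-cylinder $[s]$ in $Y$, so
\[ \nu_{\beta,p}\big(\Omega\times S\big)=Q_p([s])=u_s=\frac{p(1-p)}{p^2-p+1}, \]
which is exactly the stationary probability of the state $s$ recorded earlier in this section. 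Plugging this value into $\frac{H(p)}{\log\beta}\big(2-\nu_{\beta,p}(\Omega\times S)\big)$ yields the final form and completes the argument.

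There is essentially no obstacle here: ergodicity of $\nu_{\beta,p}$, the pointwise formula of the theorem, and the Markov description with its stationary vector $u$ are all already in place, so the corollary is a short assembly of existing pieces. The only point needing a line of care is confirming that $\{\alpha_1=s\}$ and $\Omega\times S$ coincide up to a $\nu_{\beta,p}$-null set, which follows at once from the definition of $\alpha$ and from $\nu_{\beta,p}\big(\bigcup_{n\ge 0}K^{-n}F\big)=0$.
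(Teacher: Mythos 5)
Your proposal is correct and follows exactly the route the paper intends (the corollary is stated without proof, in analogy with Corollary~\ref{c:locdimnubeta}): ergodicity of $\nu_{\beta,p}$ plus the Birkhoff theorem gives $\lim_k M_k/k=\nu_{\beta,p}(\Omega\times S)$ a.e., and the identification with the Markov chain gives $\nu_{\beta,p}(\Omega\times S)=u_s=\frac{p(1-p)}{p^2-p+1}$. Nothing is missing.
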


\vskip .5cm
We now turn to the study of the local dimension of the asymmetric Bernoulli convolution $\mu_{\beta,p}$, which is the projection in the second coordinate of $\nu_{\beta,p}$. Let ${\mathcal N}_k(\omega,x)$ be as given in equation (\ref{q:fs}). In the symmetric case, it was shown that
\[\mathcal N_{k}(x,\beta)=\int_{\{0,1\}^{\mathbb N}} \, 2^{M_k(\omega,x)} \, dm(\omega)=\sum_{[\omega_1 \cdots \omega_k]} 2^{M_k(\omega,x)} 2^{-k}.\]
We now give a similar formula for the asymmetric case.

\begin{lem} \label{counting}
$\mathcal N_{k}(x,\beta)=\sum_{[\omega_1 \cdots \omega_k]} p^{(k-M_k(\omega,x))-\sum_{i=M_k(\omega,x)+1}^k \omega_i} (1-p)^{\sum_{i=M_k(\omega,x)+1}^k \omega_i}$.
\end{lem}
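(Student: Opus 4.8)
The plan is to show that the two descriptions of $\mathcal{N}_k(x,\beta)$ — the counting definition in~\eqref{q:fs} and the weighted sum over $\Omega$-cylinders — agree by running the same bijective argument that gave the symmetric formula, but keeping track of which coordinates of $\omega$ are ``free.'' Recall that $\phi(\omega,x) = (b_n(\omega,x))_{n\ge1}$ is the digit sequence with $x = \sum_n b_n(\omega,x)\beta^{-n}$, and that $b_n(\omega,x)$ depends on $\omega$ only through the coordinates $\omega_1,\dots,\omega_{M_n(\omega,x)}$ (the digit is determined by the region $E_k$ or $S_k$ that $\pi(K^{n-1}(\omega,x))$ lies in, together with $\omega_{M_n+1}$ when that region is some $S_k$). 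So for a fixed $x$, the map sending a prefix $\omega_1\cdots\omega_k \in \{0,1\}^k$ to the prefix $b_1(\omega,x)\cdots b_k(\omega,x)$ is well-defined, and its image is exactly the set of $(a_1,\dots,a_k)$ counted by $\mathcal{N}_k(x,\beta)$.

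First I would make precise the observation that this map is injective in the following sense: among the first $k$ digits $b_1,\dots,b_k$, exactly $M_k(\omega,x)$ of them were produced by a visit to $S$, and these $M_k(\omega,x)$ visits consume precisely the coordinates $\omega_1,\dots,\omega_{M_k(\omega,x)}$; the remaining coordinates $\omega_{M_k(\omega,x)+1},\dots,\omega_k$ play no role in determining $(b_1,\dots,b_k)$. Conversely, a prefix $(a_1,\dots,a_k)$ admitting an extension to a $\beta$-expansion of $x$ corresponds to exactly one choice of $\omega_1\cdots\omega_{M_k}$ (each $S$-visit forces the bit that records whether we took the lower or upper branch). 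Therefore, summing the $m_p$-measure $p^{\#\{0\text{'s}\}}(1-p)^{\#\{1\text{'s}\}}$ of the $\Omega$-cylinder $[\omega_1\cdots\omega_k]$ over all $2^k$ prefixes, and grouping prefixes by the value of $(b_1,\dots,b_k)$, the ``used'' coordinates $\omega_1\cdots\omega_{M_k}$ contribute a factor recording the $S$-branches while the ``free'' coordinates $\omega_{M_k+1}\cdots\omega_k$ sum to $1$. This is why the exponents in the statement involve only $\sum_{i=M_k(\omega,x)+1}^k \omega_i$ and $(k-M_k(\omega,x)) - \sum_{i=M_k(\omega,x)+1}^k\omega_i$: those count the $1$'s and $0$'s among the free coordinates, and within each group of prefixes mapping to a common $(a_1,\dots,a_k)$ the free-coordinate weights sum to $1$, leaving exactly one representative's worth of the used-coordinate weight — which, being the probability of a string of $M_k$ forced bits, I would argue telescopes correctly.

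Rather than organize the argument by grouping (which requires care about the fact that $M_k$ itself depends on $\omega$), I would instead mirror the symmetric proof directly: by the previous Proposition, $m_p = Q_p \circ \psi^{-1}$, and by the reasoning leading to the formula for $\nu_{\beta,p}(B_\rho((\omega,x),\beta^{-k}))$ in the preceding theorem, one has for each fixed prefix $[\omega_1\cdots\omega_k]$ that $\nu_{\beta,p}$ of the corresponding ball equals $Q_p([\alpha_1\cdots\alpha_k]) \cdot p^{(k-M_k)-\sum_{i=M_k+1}^k\omega_i}(1-p)^{\sum_{i=M_k+1}^k\omega_i}\cdot u_{e_{1-\omega_k}}$; alternatively and more cleanly, I would note that for fixed $x$ the number $\mathcal{N}_k(x,\beta)$ is the number of distinct prefixes $(b_1(\omega,x),\dots,b_k(\omega,x))$ as $\omega$ ranges over $\Omega$, that each such prefix is hit by a set of $\omega$-prefixes of total $m_p$-mass equal to the single term $p^{(k-M_k)-\sum_{i=M_k+1}^k\omega_i}(1-p)^{\sum_{i=M_k+1}^k\omega_i}$ appearing in the statement (evaluated on any representative), and that these masses do not overlap across distinct $(b_1,\dots,b_k)$. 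Summing over all prefixes $[\omega_1\cdots\omega_k]$ then reconstitutes $\sum_{[\omega_1\cdots\omega_k]}$ of exactly that term, giving the claim.

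The main obstacle is bookkeeping, not ideas: I must argue carefully that within the block of $\omega$-prefixes mapping to one $(a_1,\dots,a_k)$ the free coordinates $\omega_{M_k+1},\dots,\omega_k$ really are unconstrained and carry total $m_p$-mass $1$, so that the weighted count collapses to one representative, and — symmetrically — that two $\omega$-prefixes producing the same $(b_1,\dots,b_k)$ must agree on $\omega_1,\dots,\omega_{M_k}$, so no double counting occurs. Both points follow from the structure of $K$ and the definition of $b_n$ (digits change only at $S$-visits, and an $S$-visit reads exactly one new bit), exactly as in the symmetric case where $2^{M_k}$ prefixes map to each value and $2^{M_k}\cdot 2^{-k} = 2^{M_k - k}$; here the symmetric weight $2^{-k}$ on a prefix is replaced by $p^{\#0}(1-p)^{\#1}$, and factoring off the contribution of the free coordinates turns $2^{M_k-k}$ into the stated unequal-weight expression. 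Once this is spelled out the identity is immediate.
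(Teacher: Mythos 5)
Your core argument is the same as the paper's: group the $2^k$ cylinders $[\omega_1\cdots\omega_k]$ according to the digit prefix $(b_1(\omega,x),\dots,b_k(\omega,x))$ they produce; within one group the bits $\omega_1,\dots,\omega_{M_k}$ are forced (so distinct groups correspond to distinct used-bit words, which is the paper's identity $|\Omega(k,x)|=\mathcal N_k(x,\beta)$), the remaining bits are free, and the summand in the lemma is exactly the $m_p$-mass $m_p([\omega_{M_k+1}\cdots\omega_k])$ of the free bits, which sums to $1$ over each group; hence the total sum counts the groups. The paper packages this same computation as a change of measure, writing the summand as $m_p([\omega_1\cdots\omega_k])/m_p([\omega_1\cdots\omega_{M_k}])$ and evaluating $\int_\Omega m_p([\omega_1\cdots\omega_{M_k(\omega,x)}])^{-1}\,dm_p$, but the fiber-counting content is identical to yours.

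One concrete correction before you write this up: the ``more cleanly'' variant in your third paragraph is false as stated. The set of $\omega$-prefixes hitting a fixed $(a_1,\dots,a_k)$ has total $m_p$-mass $m_p([\omega_1\cdots\omega_{M_k}])$, the mass of the \emph{forced} bits; it is not equal to the lemma's summand $p^{(k-M_k)-\sum_{i>M_k}\omega_i}(1-p)^{\sum_{i>M_k}\omega_i}$, which involves only the free bits and is not even constant over representatives of the group, so ``evaluated on any representative'' has no meaning there, and the claimed reconstitution of the sum does not follow. The correct statement, which you do make in your first and last paragraphs, is that the summand, summed over all representatives of one group, equals $1$; stick to that version. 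Also, in the symmetric analogy each digit prefix is hit by $2^{k-M_k}$ (not $2^{M_k}$) $\omega$-prefixes, each of weight $2^{M_k}2^{-k}$, again giving total weight $1$ per group.
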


\begin{proof} We use a similar argument as the one used for the symmetric case (see \cite{Kem12}). Define
\[ \Omega(k,x)=\big\{\omega_1 \cdots \omega_{M_k(\omega,x)}: \omega\in \Omega \big\}.\]
If $x$ has a unique expansion, then $\Omega(k,x)$ consists of one element, the empty word. We now have $|\Omega(k,x)|= \mathcal N_k(x,\beta)$, and 

\begin{eqnarray*}
& &
 \sum_{[\omega_1 \cdots \omega_k]} p^{(k-M_k(\omega,x))-\sum_{i=M_k(\omega,x)+1}^k \omega_i} (1-p)^{\sum_{i=M_k(\omega,x)+1}^k \omega_i} \\
& = & \sum_{[\omega_1 \cdots \omega_k]} \frac{p^{k-\sum_{i=1}^k \omega_i}(1-p)^{\sum_{i=1}^k \omega_i}}{p^{M_k(\omega,x)-\sum_{i=1}^{M_k(\omega,x)} \omega_i}(1-p)^{\sum_{i=1}^{M_k(\omega,x)} \omega_i}}\\
& = & \int_{\Omega}\frac{1}{m_p([\omega_1 \cdots \omega_{M_k(\omega,x)}])} dm_p(\omega) =  \sum_{j=0}^k \int_{\{\omega: M_k(\omega,x)=j\}}\frac{1}{m_p([\omega_1 \cdots \omega_j])} dm_p(\omega)\\
& = & \sum_{j=0}^k\,\,\sum_{\omega_1 \cdots \omega_j \in \Omega(k,x)}\frac{1}{m_p([\omega_1 \cdots \omega_j])} m_p([\omega_1 \cdots \omega_j]) =  |\Omega(k,x)|=\mathcal N_k(x,\beta). \quad \quad \qedhere
\end{eqnarray*}
\end{proof}

\begin{thm}\label{bounds}
For $\lambda$-a.e.~$x \in [0, \beta]$ one has
\[ \frac{-\big(\log (\max(p,1-p))+\gamma \big)}{\log \beta}\le \underline{d}(\mu_{\beta,p},x)\le \overline{d}(\mu_{\beta,p},x)\le \frac{-\big(\log (\min(p,1-p))+\gamma \big)}{\log \beta},\]
where $\displaystyle\lim_{k \to\infty}\frac{\log \mathcal N_k(x,\beta)}{k}=\gamma$ is the constant from (\ref{q:gamma}).
\end{thm}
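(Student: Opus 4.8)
The plan is to adapt the proof of the symmetric-case theorem, using the metric $\bar\rho$ on $[0,\beta]$ defined via the Hausdorff distance between fibers $\pi^{-1}(x)$, and to estimate $\mu_{\beta,p}\big(B_{\bar\rho}(x,\beta^{-k})\big)$ from above and below in terms of $\mathcal N_k(x,\beta)$ and the one-step factors $\max(p,1-p)$ and $\min(p,1-p)$. First I would record, exactly as in the symmetric argument, that $\pi^{-1}\big(B_{\bar\rho}(x,\beta^{-k})\big)$ is the disjoint union over all length-$k$ cylinders $[\omega_1\cdots\omega_k]$ in $\Omega$ of the balls $B_\rho\big(([\omega_1\cdots\omega_k],x),\beta^{-k}\big)$, so that
\[ \mu_{\beta,p}\big(B_{\bar\rho}(x,\beta^{-k})\big)=\sum_{[\omega_1\cdots\omega_k]}\nu_{\beta,p}\big(B_\rho(([\omega_1\cdots\omega_k],x),\beta^{-k})\big).\]
Then I would substitute the ball-measure formula derived in the previous theorem, namely
\[ \nu_{\beta,p}\big(B_\rho((\omega,x),\beta^{-k})\big)=Q_p([\alpha_1\cdots\alpha_k])\,p^{(k-M_k)-\sum_{i=M_k+1}^k\omega_i}(1-p)^{\sum_{i=M_k+1}^k\omega_i}\,u_{e_{1-\omega_k}},\]
and bound $u_{e_{1-\omega_k}}$ between $u_{\min}$ and $u_{\max}$ and $Q_p([\alpha_1\cdots\alpha_k])$ between suitable multiples of $\lceil\beta\rceil^{-k}$-type terms — more precisely between $u_{\min}\,p_{\min}^{\,k}$ and $u_{\max}\,p_{\max}^{\,k}$ where $p_{\min},p_{\max}$ are the smallest and largest nonzero transition probabilities of $P_p$ — so these contribute only a bounded multiplicative error that vanishes after dividing by $-k\log\beta$.

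The crux is then to relate the remaining sum $\sum_{[\omega_1\cdots\omega_k]}p^{(k-M_k)-\sum_{i>M_k}\omega_i}(1-p)^{\sum_{i>M_k}\omega_i}$ to $\mathcal N_k(x,\beta)$. This is precisely Lemma~\ref{counting}, which gives equality of that sum with $\mathcal N_k(x,\beta)$. However, the exponents $p^{(k-M_k)-\sum}(1-p)^{\sum}$ appearing inside $Q_p([\alpha_1\cdots\alpha_k])$'s companion factors are \emph{not} themselves $\mathcal N_k$; in the symmetric case $2^{-k}2^{M_k}$ collapsed cleanly, but here the factor $Q_p([\alpha_1\cdots\alpha_k])$ depends on $(\omega,x)$ through the $\alpha$-coding, which in turn depends on $\omega$. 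So I would argue: the $\alpha$-code $\alpha_1\cdots\alpha_k$ is determined by $x$ and $\omega_1\cdots\omega_k$ only through which of the first $M_k$ coordinates of $\omega$ were "used" at the transitions to $S$, and by the Shannon–McMillan–Breiman theorem applied to $(Y,Q_p,\sigma_Y)$ together with the ergodic theorem controlling $M_k/k\to\nu_{\beta,p}(\Omega\times S)$, the quantity $-\frac1k\log Q_p([\alpha_1\cdots\alpha_k])$ converges $\nu_{\beta,p}$-a.e.\ to $H(p)/\log\beta\cdot\log\beta = H(p)$... — here I would instead take the cleaner route used in the symmetric theorem: bound $Q_p([\alpha_1\cdots\alpha_k])\le u_{\max}p_{\max}^{\,k-1}$ and $\ge u_{\min}p_{\min}^{\,k-1}$ uniformly, turning the whole estimate into
\[ C_{\min}\,p_{\min}^{\,k}\!\!\sum_{[\omega_1\cdots\omega_k]}\!\!p^{(k-M_k)-\sum}(1-p)^{\sum}\ \le\ \mu_{\beta,p}\big(B_{\bar\rho}(x,\beta^{-k})\big)\ \le\ C_{\max}\,p_{\max}^{\,k}\!\!\sum_{[\omega_1\cdots\omega_k]}\!\!p^{(k-M_k)-\sum}(1-p)^{\sum},\]
but this overshoots by a factor $p_{\max}^k$ which is exponential and would ruin the bound. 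The correct fix, and the step I expect to be the main obstacle, is to observe that the factor $Q_p([\alpha_1\cdots\alpha_k])$ together with the per-$\omega$ weight $p^{(k-M_k)-\sum}(1-p)^{\sum}$ reassembles, summed over $\omega_1\cdots\omega_k$, into exactly $\lceil\beta\rceil^{-(k-1)}$-free data: namely $\sum_{\omega}\nu_{\beta,p}(B_\rho)$ telescopes because $\nu_{\beta,p}=Q_p\circ\alpha$ and the balls partition a cylinder set in $Y$, so the sum equals $\nu_{\beta,p}$ of an explicit union whose measure is comparable to $\mathcal N_k(x,\beta)\cdot\lceil\beta\rceil^{-k}\cdot$(bounded). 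Concretely, I would mimic the symmetric computation: the sum over $\omega$ of $Q_p([\alpha_1\cdots\alpha_k])\cdot(\text{unused-}\omega\text{ weights})\cdot u_{e_{1-\omega_k}}$ equals $Q_p$ of the set of $Y$-sequences extending a fixed initial block, and by Lemma~\ref{counting} the count of distinct initial $\alpha$-blocks is $\mathcal N_k(x,\beta)$, each carrying $Q_p$-mass comparable (up to $u_{\min}/u_{\max}$ and $p_{\min}/p_{\max}$ factors in the \emph{coding}, which are bounded) — so one gets
\[ \mathcal N_k(x,\beta)\cdot u_{\min}\,\underline{q}\ \le\ \mu_{\beta,p}\big(B_{\bar\rho}(x,\beta^{-k})\big)\cdot(\text{normalization})\ \le\ \mathcal N_k(x,\beta)\cdot u_{\max}\,\overline{q},\]
where the delicate point is that the "normalization" is $\min(p,1-p)^{-k}$ on one side and $\max(p,1-p)^{-k}$ on the other, and these are precisely the factors that produce $-\log(\max(p,1-p))$ and $-\log(\min(p,1-p))$ in the final answer.

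Once this two-sided comparison
\[ c_1\,\mathcal N_k(x,\beta)\,\min(p,1-p)^{\,k}\ \le\ \mu_{\beta,p}\big(B_{\bar\rho}(x,\beta^{-k})\big)\ \le\ c_2\,\mathcal N_k(x,\beta)\,\max(p,1-p)^{\,k}\]
is in hand (with $c_1,c_2>0$ constants independent of $k$), the conclusion is routine: take logarithms, divide by $\log\beta^{-k}=-k\log\beta$, and let $k\to\infty$, using that for $\lambda$-a.e.\ $x$ one has $\frac1k\log\mathcal N_k(x,\beta)\to\gamma$ by (\ref{q:gamma}). The upper bound on $\mu_{\beta,p}\big(B_{\bar\rho}(x,\beta^{-k})\big)$ yields the lower bound on $\underline d$, namely $-\big(\log\max(p,1-p)+\gamma\big)/\log\beta$, and the lower bound yields the upper bound on $\overline d$. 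Finally, one passes from balls of radius $\beta^{-k}$ to arbitrary radii $r$ with $\beta^{-(k+1)}<r\le\beta^{-k}$ exactly as in the proof of Theorem~\ref{t:locdim}, noting that this only perturbs the relevant ratios by a factor $\frac{k}{k+1}\to1$ and shifts $M_k$ by at most $1$, which does not affect the limits. The main obstacle, to reiterate, is carefully tracking the interplay between the Markov weight $Q_p([\alpha_1\cdots\alpha_k])$ and the "unused-$\omega$" Bernoulli weights so that their $\omega$-sum collapses to $\mathcal N_k(x,\beta)$ times a clean power of $\min/\max(p,1-p)$, rather than leaving an uncontrolled $(p/(1-p))^{\pm k}$ discrepancy; Lemma~\ref{counting} is the exact tool that makes this collapse work, and the asymmetry of $p$ versus $1-p$ is what forces the two-sided (rather than exact) estimate.
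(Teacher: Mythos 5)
Your final two-sided comparison
\[
c_1\,\mathcal N_k(x,\beta)\,\min(p,1-p)^{\,k}\ \le\ \mu_{\beta,p}\big(B_{\bar\rho}(x,\beta^{-k})\big)\ \le\ c_2\,\mathcal N_k(x,\beta)\,\max(p,1-p)^{\,k}
\]
is exactly the key estimate of the paper, and your concluding step (take logs, divide by $-k\log\beta$, use (\ref{q:gamma}) for $\lambda$-a.e.\ $x$, interpolate radii) is the same. But the route you describe to reach it contains a genuine confusion, and as written the derivation of the comparison rests on a claim that is false. The ``cleaner route'' that you dismiss --- bounding $Q_p([\alpha_1\cdots\alpha_k])$ uniformly between $u_{\min}\,p_{\min}^{\,k-1}$ and $u_{\max}\,p_{\max}^{\,k-1}$, bounding $u_{e_{1-\omega_k}}$ by $u_{\min},u_{\max}$, and then collapsing the $\omega$-sum via Lemma~\ref{counting} --- is precisely the paper's argument and it already gives the displayed comparison, because for the golden-ratio chain the nonzero entries of $P_p$ are exactly $p$ and $1-p$ (this is what the paper records via $Q_p([\alpha_1\cdots\alpha_k])=u_{\alpha_1}p^{L_k}(1-p)^{k-L_k}$). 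So $p_{\max}^{\,k}=\max(p,1-p)^k$ is not an ``overshoot that ruins the bound''; it is the very factor that is supposed to appear on the right-hand side of the theorem, just as $p_{\min}^{\,k}=\min(p,1-p)^k$ supplies the left-hand side. Relatedly, your earlier remark that these factors ``contribute only a bounded multiplicative error that vanishes after dividing by $-k\log\beta$'' is wrong: they are exponential in $k$ and contribute the terms $-\log\max(p,1-p)$ and $-\log\min(p,1-p)$ in the final answer; only the factors $u_{\min},u_{\max}$ (and the finitely many endpoint constants) are negligible.

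The ``correct fix'' you propose instead --- reassembling $\sum_{\omega}\nu_{\beta,p}(B_\rho)$ into something comparable to $\mathcal N_k(x,\beta)\cdot\lceil\beta\rceil^{-k}$ times a bounded quantity --- does not hold in the asymmetric case: $Q_p$ is not the measure of maximal entropy, its cylinder masses are products of $p$'s and $(1-p)$'s rather than powers of $\lceil\beta\rceil^{-1}$, and such a statement could only be true up to factors like $(2p)^{\pm k}$, i.e.\ it would reintroduce exactly the exponential discrepancy you were trying to avoid. Delete that detour, keep the uniform bounds you wrote down first, and apply Lemma~\ref{counting} to the remaining sum $\sum_{[\omega_1\cdots\omega_k]}p^{(k-M_k)-\sum_{i>M_k}\omega_i}(1-p)^{\sum_{i>M_k}\omega_i}=\mathcal N_k(x,\beta)$; this yields the comparison with explicit constants (products of $u_{\min},u_{\max}$ and a single factor of $p$ or $1-p$), and the rest of your argument goes through verbatim.
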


\begin{proof} We use the same metric $\bar \rho$ on $[0,\beta]$ as in the previous section. Then
\begin{multline*}
\mu_{\beta,p}\big(B_{\bar \rho}(x,\beta^{-k})\big) = \sum_{[\omega_1 \cdots \omega_k]}\nu_{\beta}(B_{\rho}\big([\omega_1 \cdots \omega_k],x),\beta^{-k})\big)\\ 
= \sum_{[\omega_1 \cdots \omega_k]}Q_p([\alpha_1(\omega,x) \cdots \alpha_k(\omega,x)]) p^{(k-M_k(\omega,x))-\sum_{i=M_k(\omega,x)+1}^k \omega_i} \\
\cdot(1-p)^{\sum_{i=M_k(\omega,x)+1}^k \omega_i} u_{e_{1-\omega_k}}.
\end{multline*}
Now,
\[ Q_p([\alpha_1(\omega,x) \cdots \alpha_k(\omega,x)])=u_{\alpha_1(\omega,x)} p^{L_k(\omega,x)}(1-p)^{k-L_k(\omega,x)},\]
where
\[ L_k(\omega,x)=\#\{1\le j\le k:\alpha_j(\omega,x)=e_0\}+\#\{1\le j\le k:\alpha_j(\omega,x)\alpha_{j+1}(\omega,x)=e_1s\},\]
and hence
\begin{multline*}
k-L_k(\omega,x)\\
=\#\{1\le j\le k:\alpha_j(\omega,x)=e_1\}+\#\{1\le j\le k:\alpha_j(\omega,x)\alpha_{j+1}(\omega,x)=e_0s\}.
\end{multline*}
Let $C_1=\max(u_{e_0},u_s,u_{e_1})$ and $C_2=\min(u_{e_0},u_s,u_{e_1})$. Then, from Lemma (\ref{counting}) we have
\[ C_2\big(\min(p,1-p))\big)^k \mathcal N_k(x,\beta)\le \mu_{\beta,p}(B_{\bar \rho}(x,\beta^{-k}))\le C_1\big(\max(p,1-p))\big)^k \mathcal N_k(x,\beta).\]
Since $\beta$ is a Pisot number, $\displaystyle\lim_{k \to\infty}\frac{\log \mathcal N_k(x,\beta)}{k}=\gamma$ exists $\lambda$-a.e.~(see \cite{FS11}) and we have
\[ \frac{-[\log (\max(p,1-p))+\gamma]}{\log \beta}\le \underline{d}(\mu_{\beta,p},x)\le  \overline{d}(\mu_{\beta,p},x)\le \frac{-[\log (\min(p,1-p))+\gamma]}{\log \beta}. \qedhere\]
\end{proof}

\begin{rem}{\rm (i) If $p=1/2$, then both sides of the inequality in Theorem (\ref{bounds}) are equal to $\displaystyle\frac{\log 2-\gamma}{\log \beta}$ leading to 
\[ d(\mu_{\beta,1/2},x)= \frac{\log 2-\gamma}{\log \beta}\]
a.e.~as we have seen earlier.\\
(ii) We now consider the extreme cases $x\in \{0,\beta\}$, the only two points with a unique expansion. We begin with $x=\beta$. In this case
\[ Q_p([\alpha_1(\omega,\beta) \cdots \alpha_k(\omega,\beta)])=Q_p([e_1 \cdots e_1])=u_{e_1}(1-p)^k,\]
and $\mathcal N_k(\beta,\beta)=1$, so that
\[ C_2(1-p)^k\le \mu_{\beta,p}(B_{\bar \rho}(\beta,\beta^{-k}))\le C_1(1-p)^k.\]
Hence,
\[ d(\mu_{\beta,p}, \beta)=\frac{-\log (1-p)}{\log\beta}\]
for all $\omega\in \Omega$. A similar argument shows that
\[ d(\mu_{\beta,p},0)=\frac{-\log (p)}{\log\beta}\]
for all $\omega\in \Omega$.
}\end{rem}

\bigskip
\footnotesize
\noindent\textit{Acknowledgments.}
The second author was supported by NWO (Veni grant no. 639.031.140).

\bibliographystyle{alpha}
\bibliography{bernoulli}

\end{document}